\numberwithin{equation}{section}
\newtheorem{thm}{Theorem}[subsection]
\newtheorem{lem}[thm]{Lemma}
\newtheorem{cor}[thm]{Corollary}
\newtheorem{prop}[thm]{Proposition}
\newtheorem{rem}[thm]{Remark}
\newcommand{\Lnum}[3]{\left[ \begin{matrix} #1 \ ; \  #2 \\ \ \ #3\   \end{matrix} \right]}
\newcommand{\g}{\widehat{\mathfrak g}}
\newcommand{\D}{\Delta}
\newcommand{\Hom}{{\text {Hom}}}
\newcommand{\thmref}[1]{Theorem~\ref{#1}}
\newcommand{\propref}[1]{Proposition~\ref{#1}}
\newcommand{\corref}[1]{Corollary~\ref{#1}}
\newcommand{\secref}[1]{\S\ref{#1}}
\newcommand{\lemref}[1]{Lemma~\ref{#1}}
\newcommand{\eqnref}[1]{~(\ref{#1})}
\def\um{{\underline m}}
\def\ul{{\underline l}}
\def\up{{\underline p}}
\begin{document}

%\centerline{\sc Imaginary Verma modules and Kashiwara algebras for $U_q(\bighat{\mathfrak{g}})$.}

%\date{}                                           % Activate to display a given date or no date

\title{}
\title{\sc An imaginary PBW  basis for quantum affine algebras of type 1.}
\author{ Ben Cox}
\author{Vyacheslav Futorny}
\author{Kailash C. Misra}
\keywords{Quantum affine algebras,  Imaginary Verma modules, simple modules}
\address{Department of Mathematics \\
The Graduate School at the College of Charleston \\
66 George Street  \\
Charleston SC 29424, USA}\email{coxbl@cofc.edu}

\address{Department of Mathematics\\
 University of S\~ao Paulo\\
 S\~ao Paulo, Brazil}
 \email{futorny@ime.usp.br}
 \address{Department of Mathematics\\
 North Carolina State University\\
 Raleigh, NC 27695-8205, USA}\email{misra@ncsu.edu}
 
 \begin{abstract}  Let $\widehat{\mathfrak g}$ be an affine Lie algebra of type 1.   We give a PBW basis for the quantum affine algebra $U_q(\widehat{\mathfrak g})$ with respect to the triangular decomposition of  $\widehat{\mathfrak g}$ associated with the imaginary positive root system.\end{abstract}
\date{}
%\thanks{
%The authors are grateful to the organizers for the invitation to the conference at Banff where this project was initiated. The first author would like to thank North Carolina State University for the support and hospitality during his numerous visits to Raleigh.  The second author was partially supported by Fapesp (processo 2005/60337-2) and CNPq (processo 301743/2007-0). He is grateful to the North Carolina State University for the support and hospitality during his visit to Raleigh. The third author was partially supported by the NSA grant H98230-08-1-0080.}

\thanks{The first author would like to thank North Carolina State University for the support and hospitality during his numerous visits to Raleigh.  The second author was partially supported by Fapesp (processo 2005/60337-2) and CNPq (processo 301743/2007-0). KCM was partially supported by the NSA grant H98230-12-1-0248.}

\maketitle
\subjclass{Primary 17B37, 17B15; Secondary 17B67, 1769}

\section{Introduction}

Although affine Lie algebras are infinite dimensional analogs of finite dimensional semisimple Lie algebras, they have special features that do not have analogs in the finite dimensional theory. One such feature is the existence of  closed partitions of the root system into sets of positive and negative roots which are not equivalent under the action of the Weyl group to the standard partitions of the root system. Such partitions are called nonstandard partitions. The classification of closed subsets of the root system for affine Kac-Moody algebras was obtained by Jakobsen and Kac \cite{JK,MR89m:17032}, and independently by Futorny \cite{MR1078876,MR1175820}. In particular, it is shown that for affine Lie algebras there are only a finite number of Weyl-equivalency classes of these nonstandard partitions. Corresponding to each non-standard partition we have non-standard Borel subalgebras from which one may induce other non-standard Verma-type modules and these typically contain both finite and infinite dimensional weight spaces. For example, for the affine Lie  algebra $ \widehat{\mathfrak{sl}(2)}$, the only non-standard modules of Verma-type are the {\it imaginary Verma modules} \cite{MR95a:17030}. In this paper we focus on the imaginary Verma modules for affine Lie algebras of type 1.

Let $\widehat{\mathfrak g}$ be an affine Lie algebra of type 1 and $U_q(\widehat{\mathfrak g})$ denote the associated quantum affine algebra  introduced independently by Drinfeld \cite{MR802128} and Jimbo \cite{MR797001}. One of the problems in dealing with nonstandard partitions of root systems is that the associated triangular decomposition of $\widehat{\mathfrak g}$ can not be lifted to a triangular decomposition of $U_q(\widehat{\mathfrak g})$. In \cite {MR97k:17014}, the imaginary Verma module for the affine Lie algebra $ \widehat{\mathfrak{sl}(2)}$ is $q$-deformed in such a way that the weight multiplicities, both finite and infinite-dimensional, are preserved. This construction is generalized to the imaginary Verma modules for any affine Lie algebra $\widehat{\mathfrak g}$ of type 1 in \cite{MR1662112}. Furthermore, in \cite {MR97k:17014}, the authors used a special technique involving the Diamond Lemma to construct a PBW type basis for the quantum imaginary Verma module for $U_q(\widehat{\mathfrak{sl}(2)})$ which does not generalize to other affine Lie algebras. In this paper we use a different approach and construct a PBW type basis for the quantum imaginary Verma module for any quantum affine algebra $U_q(\widehat{\mathfrak g})$ of type 1 (see \thmref{mainresult} and \thmref{pbwthm}).

A reader might want to compare our results with those appearing in the work of  Beck, Chari and Pressley \cite{MR1712630}.  In this cited paper their Lemma 1.5 should be compared to our definition of $X_{\beta_r^\pm}$ given below in section 3. 
%On the other hand Definition 1.2 in \cite{MR1712630} is used to give basis for a  triangular decompositions different from the standard one but only by replacing finitely many root vectors in the standard basis (since those functions have finite support). On the other hand in order to obtain our decomposition we need functions with infinite support.   
The difference of our decomposition with that given in the work of Beck, Chari and Pressley has more to do with the breakdown of different quantized Borel subalgebras.  In  \cite{MR1712630}, the authors have a decomposition of the quantized Borel subalgebra coming from the standard positive root system as
$$
U^+\cong U^+(>)\otimes U^+(0)\otimes U^+(<)
$$
where $U^+(>)$ (resp. $ U^+(<)$), is the subalgebra generated by root vectors having a root from the set $\{\alpha+k\delta\,|\,k\geq 0,\enspace\alpha\in \Delta_{0,+}\}$  (resp.$\{-\alpha+k\delta\,|\,k>0,\enspace\alpha\in \Delta_{0,+}\}$) and $ U^+(0)$ is the subalgebra generated by certain root vectors for the set of imaginary positive roots $\{k\delta\,|\,k> 0\}$.    Here $\Delta_0$ denotes the set of roots of $\mathfrak g$ with chosen set of positive/negative roots $\Delta_{0,\pm}$.   Let $U^+_q(S)$ generated by the root vectors coming from the ``natural" or synonamously  imaginary partition of positive roots $S=\{\alpha+k\delta\,|\, k\in\mathbb Z,\alpha \in  \Delta_{0,+}\}\cup \{k\delta\,|\,k\geq 0\}$ for the natural Borel subalgebra.  This subalgebra gives rise to an imaginary Verma module where one induces up using the natural Borel.  If one chooses another partition instead of the natural or standard positive root system one obtains other Verma type modules.  In this paper we focus on quantized imaginary Verma modules which are in a sense the simplest quantized Verma type modules.
We don't decompose $U^+$, rather we decompose the positive part of the quantized enveloping algebra $U^+_q(S)$.
Our main result given below in \thmref{mainresult} is essentially
$$
U^+(S)\cong U^+(>) \otimes \Omega( U^+(<))\otimes U^+(0)\otimes U^0
$$ 
where the algebras on the right are what is defined above as in \cite{MR1712630} and $\Omega$ is an anti-automorphism. 
The braid group action on the root vectors of the quantized enveloping algebra plays a fundamental role in the proof of the our result.   We use them to transform results appearing in Beck (see  \cite{MR1301623} and \cite{MR1298947}), and Damiani's work (see \cite{MR1634087} and \cite{MR1802170}) to our setting.

In  \cite{MR1712630} the authors give an algebraic characterization of the affine canonical basis corresponding to the standard set of positive roots generalizing results of Lusztig \cite{MR1035415} and Kashiwara \cite{MR1115118}.   In future work we will construct an analog of what we call the Kashiwara algebra $\mathcal K_q$ for the imaginary Verma module $M_q(\lambda)$ for the quantum affine algebra $U_q(\widehat{\mathfrak{g}})$ by introducing certain Kashiwara-type operators.
Then we will prove that a certain quotient $\mathcal N_q^-$ of  $U_q(\widehat{\mathfrak{g}})$ is a simple $\mathcal K_q$-module.
This has already been done in the setting of $U_q(\widehat{\mathfrak{sl}(2)})$ (see \cite{MR2642563}).  Our eventual aim is to provide an algebraic characterization of a ``reduced" canonical basis for the quantized imaginary reduced Verma module constructed from the imaginary Borel defined by the set of positive roots $S$.

We would like to thank Ilaria Damiani for making very useful suggestions for this paper. 

%\section{Preliminaries}

\section{ The affine Lie algebra $ \widehat{\mathfrak{g}}$.}

We begin by recalling some basic facts and constructions for the affine
Kac-Moody algebra $\widehat{\mathfrak{g}}$
and its imaginary Verma modules.
See \cite{K} for Kac-Moody algebra terminology and standard notations.

\subsection{} \label{notation} 
Let $I=\{0,\dots, N\}$, $I_0=\{1,2,\dots,N\}$, and $ A=(a_{ij})_{0\leq i,j\leq N}$ be a generalized affine Cartan matrix of type 1 for an untwisted affine Kac-Moody algebra $\widehat{\mathfrak g}$.   Let $D=(d_0,\dots,d_N)$ be a diagonal matrix with relatively prime integer entries such that the matrix $DA$ is symmetric.
%Let $\check P=\oplus_{i=1}^N\check \omega_i$ be the coweight lattice, $\check Q:=\oplus_{i=1}^N\mathbb Zh_i$ the coroot lattice of $\dot{\mathfrak g}$, where $h_i:=\sum_{j=1}^Na_{ij}\check\omega_j$.    One has $\dot Q=\text{Hom}(\check P,\mathbb Z)$ where $\langle \alpha_i,\check\omega_j\rangle =\delta_{ij}$.  Let $\Pi=\{\alpha_1,\dots,\alpha_N\}$ be the set of simple roots and $\check \Pi:=\{\check\alpha_1,\dots,\check\alpha_N\}$ be the set of simple coroots.   Define the reflection $s_i$, $1\leq i\leq N$, on $\check P$ (resp. $\dot Q$) by $s_i(r)=r-\langle \alpha_i,r\rangle h_i$ (resp. $s_i(z)=z-\langle z, h_i\rangle\alpha_i$).  The set of reflections $s_1,\dots,s_N$ generate as subgroup $W_0$ of $\text{Aut}\, \check P$.  Define the root system $R:=W_0\Pi$ (resp. coroot system $\check R=W_0\check \Pi$).  Let $\tilde Q=\mathbb Z\alpha_0 \oplus \dot Q$ denote the affine root lattice and set $\delta:=\alpha_0+\theta$ where $\theta$ is the highest root of $R$.
Then $\widehat{\mathfrak g}$ has the loop space realization
$$
\widehat{\mathfrak g}=  {\mathfrak g} \otimes {\mathbb C}[t,t^{-1}] \oplus {\mathbb C} c \oplus {\mathbb C} d, 
$$
where $\mathfrak g$ is the finite dimensional simple Lie algebra over $\mathbb C$ with Cartan matrix $(a_{ij})_{1\leq i,j\leq N}$, $c$ is central in $\widehat{\mathfrak g}$; $d$ is the degree derivation,
so that $[d,x \otimes  t^n] = n x \otimes  t^n$ for any $x \in { \mathfrak g}$ and
$n \in {\mathbb Z}$, and $[x \otimes  t^n, y \otimes  t^m] = [x,y] \otimes  t^{n+m} +
\delta_{n+m,0}n(x|y)c$ for all $x,y \in  {\mathfrak g}$, $n,m \in {\mathbb Z}$.  

An alternative Chevalley-Serre presentation of $\widehat{\mathfrak g}$ is given
by defining it as the Lie algebra with generators $e_i, f_i, h_i$
 ($i \in   I$) and $d$ subject to the relations
\begin{align}\label{Chevalley-Serre}
[h_i,h_j] &= 0, \qquad [d,h_i]=0,\\
[h_i, e_j] &= a_{ij}e_j, \qquad [d,e_j] = \delta_{0,j}e_j,\\
[h_i, f_j] &= -a_{ij}f_j, \qquad [d, f_j] = -\delta_{0,j}f_j,\\
[e_i, f_j] &= \delta_{ij}h_i, \\
({{\text {ad}}} e_i)^{1-a_{ij}}(e_j)&= 0, \qquad 
({{\text {ad}}} f_i)^{1-a_{ij}}(f_j) = 0, \quad i \neq j.
\end{align}
We
set $\hat{\mathfrak h}$ to be the span of $\{h_0,\dots, h_N,d\}$. 

Let $\Delta_0$ be the set of roots of $\mathfrak g$ with chosen set of positive/negative roots $\Delta_{0,\pm}$.  Let $ Q_0$ be the free abelian group with basis $\alpha_i$, $1\leq i\leq N$ which is the root lattice of $\mathfrak g$. Let $\check Q_0=\sum_i\mathbb Zh_i$ be the coroot lattice of $\mathfrak g$.   The co-weight lattice is defined to be $\check P_0=\text{Hom}(Q_0,\mathbb Z)$ with basis $\omega_i$ defined by $\langle \omega_i,  \alpha_j\rangle =\delta_{i,j}$.  The simple reflections $s_i:\check P_0\to \check P_0$ are defined by $s_i(x)=x-\langle \alpha_i,x\rangle h_i$.  The $s_i$ also act on $Q_0$ by $s_i(y)=y-\langle y,h_i\rangle \alpha_i$.
The Weyl group of $\mathfrak g$ is defined as the subgroup $W_0$ of $\text{Aut}\check P_0$ generated by $s_1,\dots,s_N$.  The affine Weyl group is defined as $W=W_0\ltimes \check Q_0$.  Let $\theta=\sum_{i=1}^Na_i\alpha_i$ be the highest positive root with $a_i$ labels of the extended Dynkin diagram and set $s_0=(s_\theta,-\check \theta)$ where 
$$
s_\theta(\lambda)=\lambda-\langle \lambda,\check\theta\rangle \theta
$$
for all $\lambda \in \mathfrak h^*$.  Note if $\alpha=\sum_ik_i\alpha_i$, then 
$$
\check\alpha=\sum_i\frac{(\alpha_i|\alpha_i)}{(\alpha|\alpha)}k_i\check\alpha_i
$$
(see \cite{K} formula (5.1.1) and Ch. 2 for the notation $(\enspace|\enspace)$.)  Note also $h_i=\check\alpha_i$.

  Then $W$ is generated by $s_0,\dots, s_N$.  Let $\tilde W=W_0\ltimes \check P_0\cong T\ltimes W$ be the generalized affine Weyl group
where $T$ is the group of Dynkin diagram automorphisms.  The length of element $\tilde w=\tau w\in \tilde W$ with $\tau \in T$ and $w\in W$ is defined by $l(\tilde w)=l(w)$.

Let $\Delta$ be the root system of $\widehat{\mathfrak g}$ with positive/negative set of roots $\Delta_{\pm}$and simple roots $\Pi=\{\alpha_0,\dots, \alpha_N\}$.  Define $\delta=\alpha_0+\theta$. Extend the root lattice $Q_0$ of $ \mathfrak g$ to the affine root lattice
$Q:=  Q _0\oplus {\mathbb Z}\delta$, and extend the form $(.|.)$ to $Q$
by setting $(q|\delta)=0$ for all $q \in Q_0$ and $(\delta|\delta)=0$. The generalized affine Weyl group $\tilde W$ acts on $Q$ as an affine transformation group.  In particular if $z\in \check P_0$ and $1\leq i\leq N$, then
$z(\alpha_i)=\alpha_i-\langle z, \alpha_i\rangle \delta$. 
Let $Q_+ = \sum_{i \in I_0} {\mathbb Z}_{\ge 0}\alpha_i \oplus {\mathbb Z}_{\ge 0}\delta$.

The root system $\Delta$ of $\widehat{\mathfrak g}$ is given by
$$
\Delta = \{ \alpha + n \delta\ |\ \alpha \in  \Delta_0, n \in {\mathbb Z}\}
\cup
\{k\delta\ |\ k\in {\mathbb Z}, k \neq 0\}.
$$
The roots of the form $\alpha+n\delta$, $\alpha \in   \Delta, n \in {\mathbb Z}$ are
called real roots, and those of the form $k\delta$,
$k \in {\mathbb Z}, k \neq 0$ are called imaginary roots.  We let
$\Delta^{re}$ and $\Delta^{im}$ denote the sets of real and
imaginary roots, respectively.  The set of positive real roots
of  $\widehat{\mathfrak g}$ is $\Delta_+^{re} =  \Delta_{0,+}
\cup \{\alpha + n\delta \ |\ \alpha \in  \Delta_0, n>0\}$ and the set of positive
imaginary roots is $\Delta_+^{im} = \{k\delta\ |\ k>0\}$.
The set of positive roots of $\widehat{\mathfrak g}$ is
$\Delta_+ = \Delta_+^{re} \cup \Delta_+^{im}$.  Similarly, on the negative
side, we have $\Delta_- = \Delta_-^{re}\cup \Delta_-^{im}$, where
$\Delta_-^{re} = \Delta_{0,-} \cup \{ \alpha + n\delta\ |\ \alpha \in  \Delta_0, n<0\}$
and $\Delta_-^{im} = \{k\delta\ |\ k<0\}$.
The 
weight lattice $P$ of ${\widehat{\mathfrak g}}$ is
$P = \{ \lambda \in {\widehat{\mathfrak h}}^*\ |\ \lambda(h_i) \in {\mathbb Z}, i \in I, \lambda(d) \in {\mathbb Z}\}$.
Let $B$ denote the associated braid group with generators
$T_0, T_1, \dots, T_N$.

%
%Let $\Delta$ denote the root system of $A_1^{(1)}$, and let
%$\{ \alpha_0, \alpha_1\}$ be a basis for $\Delta$.  Let $\delta = \alpha_0 + \alpha_1$,
%the minimal imaginary root.  Then
%$$
%\Delta = \{ \pm \alpha_1 + n\delta\ |\ n \in \mathbb Z\} \cup \{ k\delta\ |\ k \in \mathbb Z
%\setminus \{ 0 \} \}.
%$$

%\subsection{}
%The universal enveloping
%algebra $
%U(A_1^{(1)})$ of $A_1^{(1)}$
%is the associative algebra
%over $\mathbb C$ with 1
%generated by the elements $h_0, h_1, d, e_0, e_1, f_0, f_1$
%with defining relations
%\begin{align*}&[h_0,h_1]=[h_0,d] = [h_1,d]=0, \\
%&h_ie_j-e_jh_i = a_{ij}e_j, \quad h_if_j-f_jh_i=-a_{ij}f_j, \\
%&d e_j-e_j d =\delta_{0,j}e_j, \quad d f_j-f_j d = -\delta_{0,j}f_j, \\
%&e_if_j-f_je_i = \delta_{ij}h_i, \\
%&e_je_i^3-3e_ie_je_i^2+3e_i^2e_je_i-e_i^3e_j = 0 \text{ for } i \neq j, \\
%&f_jf_i^3-3f_if_jf_i^2+3f_i^2f_jf_i-f_i^3f_j = 0 \text{ for } i \neq j.
%\end{align*}
% Corresponding to the loop algebra formulation of $A_1^{(1)}$ is an
%alternative description of
%$U(A_1^{(1)})$ as the associative algebra over $\mathbb C$ with 1 generated
%by the elements $e(k), f(k)$ $(k \in \mathbb Z)$, $h(l)$ $(l \in
%\mathbb Z\setminus \{ 0\})$,
%$c, d, h$, with relations
%\begin{align*}& [c,u]=0 \ \ \text {for all} \ u\in U(A_1^{(1)}), \\
%& [h(k), h(l)]=2k \delta_{k+l,0} c, \\
%& [h,d]=0, \ \ [h, h(k)]=0, \\
%& [d,h(l)]=l h(l), \ \ [d,e(k)]=ke(k), \ \ [d,f(k)]=kf(k),\\
%& [h,e(k)]=2e(k), \ \ [h,f(k)]=-2f(k), \\
%& [h(k), e(l)]=2e(k+l), \ \ [h(k), f(l)]=-2f(k+l), \\
%& [e(k), f(l)]=h(k+l)+k \delta_{k+l,0} c.
%\end{align*}

\subsection{}  \label{partition}
Consider the partition $\D = S \cup -S$ of the root system of $\g$
where $S=\{ \alpha+ n \delta\ |\ \alpha\in \D_{0,+}, n \in \mathbb Z\} \cup
\{k\delta\ |\ k>0\}$. This is a non-standard partition of the root system
$\D$ in the sense that $S$ is not Weyl equivalent to the set
$\D_+$ of positive roots.  There are other non-standard partitions of the root system $\Delta$, but we leave the study of Verma type modules arising from these other partitions to future work.   The reason we stick to the case of the above $S$ for imaginary Verma modules is that they are perhaps the least technically complicated to work with when considering all non-conjugate non-standard partitions of $\Delta$.

\section{The quantum affine algebra $U_q(\widehat{\mathfrak g})$}

\subsection{}  \label{jimbodrinfeld}
The {\it quantum affine algebra}
$U_q(\widehat{\mathfrak g})$ is the $\mathbb C(q^{1/2})$-algebra with 1 generated by
$$ 
E_i, \enspace F_i, \enspace K_\alpha,\enspace \gamma^{\pm 1/2}, \enspace D^{\pm 1} \quad 0\leq i\leq N,\enspace \alpha\in Q,
$$
and defining relations:
\begin{align*}
& DD^{-1}=D^{-1}D=\gamma^{1/2}\gamma^{-1/2} =\gamma^{-1/2}\gamma^{1/2}=1, \\
&K_\alpha K_\beta =K_{\alpha+\beta}, K_0=1, \\
&[\gamma^{\pm 1/2},U_q(\mathfrak g)]=[D,K_i^{\pm1}]=[K_i,K_j]=0, \\
&(\gamma^{\pm 1/2})^2=K_\delta^{\pm 1},\\
& E_iF_j-F_jE_i = \delta_{ij}\frac{K_i-K_i^{-1}}{q_i-q_i^{-1}}, \\
& K_\alpha E_iK_\alpha^{-1}=q^{(\alpha|\alpha_i)}E_i, \ \ K_\alpha F_i K_\alpha^{-1} =q^{-(\alpha|\alpha_i)}F_i, \\
& DE_iD^{-1}=q^{\delta_{i,0}} E_i,\quad DF_iD^{-1}=q^{-\delta_{i,0}} F_i, \\
& \sum_{s=0}^{1-a_{ij}}(-1)^sE_i^{(1-a_{ij}-s)} E_jE_i^{(s)}=0= \sum_{s=0}^{1-a_{ij}}(-1)^sF_i^{(1-a_{ij}-s)} F_jF_i^{(s)}, \quad i\neq j.
\end{align*}
where
$$
q_i:=q^{d_i},\quad 
[n]_i= \frac{q^n_i-q^{-n}_i}{q_i-q^{-1}_i},\quad [n]_i!:=\prod_{k=1}^n[k]_i
$$
and 
$K_i=K_{\alpha_i}$, $E_i^{(s)}=E_i/[s]_i!$ and $F_i^{(s)}=F_i/[s]_i!$ (see \cite{MR1301623} and \cite{MR954661}). 

The quantum affine algebra
$U_q(\widehat{\mathfrak g})$  is a Hopf algebra with a comultiplication given by
\begin{align}
\Delta(K_i^{\pm 1}) &= K_i^{\pm 1} \otimes K_i^{\pm 1},\label{ki} \\
\Delta(D^{\pm 1})&=D^{\pm 1}\otimes D^{\pm 1},\qquad \Delta(\gamma^{\pm 1/2})=\gamma^{\pm 1/2}\otimes \gamma^{\pm 1/2}\label{dgamma}\\
\Delta(E_i) &= E_i\otimes 1 + K_i\otimes E_i, \label{ei}\\
\Delta(F_i) &= F_i\otimes K_i^{-1} + 1 \otimes F_i,\label{fi} 
\end{align}
and an antipode given by
\begin{gather*}
s(E_i) =-E_iK_i^{-1},  \quad 
s(F_i) = -K_iF_i, \\
s(K_i) = K_i^{-1}, \quad 
s(D)  = D^{-1}, \quad 
s(\gamma^{1/2}) = \gamma^{-1/2}.
\end{gather*}

Let $\Phi:U_q(\widehat{\mathfrak g})\to U_q(\widehat{\mathfrak g})$ be the $\mathbb C $-algebra automorphism defined by
\begin{gather}
\Phi(E_i)=F_i,\enspace \Phi(F_i) =E_i,\enspace \Phi(K_\alpha)=K_{\alpha},\label{automorphism} \\
 \Phi(D)=D,\enspace \Phi(\gamma^{\pm1/2})=\gamma^{\pm 1/2},\enspace \Phi(q^{\pm 1/2})=q^{\mp 1/2},\notag
\end{gather}
and let $\Omega:U_q(\widehat{\mathfrak g})\to U_q(\widehat{\mathfrak g})$ be the $\mathbb C $-algebra anti-automorphism defined by
\begin{gather}\label{antiautomophism}
\Omega(E_i)=F_i,\enspace \Omega(F_i)=E_i,\enspace \Omega(K_\alpha)=K_{-\alpha},\\ \Omega(D)=D^{-1},\enspace \Omega(\gamma^{\pm1/2})=\gamma^{\mp 1/2},\enspace \Omega(q^{\pm 1/2})=q^{\mp1/2},\notag
\end{gather}
(see \cite[Section 1]{MR1301623}).
%\color{blue}
%The anti-automorphism $\Omega$ should not be confused with the operators $\Omega_{\psi_i}$ and $\Omega_{\phi_i}$ given below in \eqnref{definingomegapsi}.
%
%\color{black}

\subsection{}\label{secondrealization}  There is an alternative realization for $U_q(\widehat{\mathfrak{g}})$,
due to Drinfeld
\cite{MR802128}, which we shall also need.   We will use the formulation due to J. Beck \cite{MR1301623}.   Let
$U_q(\widehat{\mathfrak g})$ be the associative algebra with $1$ over $\mathbb C(q^{1/2})$-
generated by
$$  
x_{ir}^{\pm 1},\enspace h_{is}, \enspace K_i^{\pm 1}, \enspace \gamma^{\pm 1/2},D^{\pm 1} \enspace 1\leq i\leq N,r,s\in\mathbb Z, s\neq 0,
$$
with defining relations:
\begin{align}
 DD^{-1}&=D^{-1}D=K_iK_i^{-1}=K_i^{-1}K_i=\gamma^{1/2}\gamma^{-1/2}=\gamma^{-1/2}\gamma^{1/2}=1,\label{drinfeldfirst} \\
[\gamma^{\pm 1/2},U_q(\mathfrak g)]&=[D,K_i^{\pm 1}]=[K_i,K_j]=[K_i,h_{jk}]=0, \\
Dh_{ir}D^{-1}&=q^rh_{ir},\quad Dx_{ir}^{\pm}D^{-1}=q^rx_{ir}^{\pm},\\
K_ix_{jr}^{\pm}K_i^{-1} &= q_i^{\pm  (\alpha_i|\alpha_j)}x_{jr}^{\pm},   \\  
[h_{ik},h_{jl}]&=\delta_{k,-l} \frac{1}{k}[ka_{ij}]_i\frac{\gamma^k-\gamma^{-k}}{q_j-q_j^{-1}},\label{hs} \\
[h_{ik},x^{\pm}_{jl}]&= \pm \frac{1}{k}[ka_{ij}]_i\gamma^{\mp |k|/2}x^{\pm}_{j,k+l}, \label{axcommutator}  \\
    x^{\pm}_{i,k+1}x^{\pm}_{jl} &- q^{\pm (\alpha_i|\alpha_j)}
    %%%%%%%%%%%%%%%%%%%%%
x^{\pm}_{jl}x^{\pm}_{i,k+1}\label{Serre}   \\
&= q^{\pm (\alpha_i|\alpha_j)}x^{\pm}_{ik}x^{\pm}_{j,l+1}
    - x^{\pm}_{j,l+1}x^{\pm}_{ik},\notag \\
[x^+_{ik},x^-_{jl}]&=\delta_{ij}
    \frac{1}{q_i-q^{-1}_i}\left( \gamma^{\frac{k-l}{2}}\psi_{i,k+l} -
    \gamma^{\frac{l-k}{2}}\phi_{i,k+l}\right), \label{xcommutator}   \\
\text{where  }
\sum_{k=0}^{\infty}\psi_{ik}z^{k} &= K_i \exp\left(
(q_i-q^{-1}_i)\sum_{l>0}  h_{il}z^{l}\right), \text{ and }\notag\\
\sum_{k=0}^{\infty}
\phi_{i,-k}z^{-k}&= K^{-1}_i \exp\left( - (q_i-q^{-1}_i)\sum_{l>0} 
h_{i,-l}z^{-l}\right).\label{phidef}\\
\text{For }i\neq j,\enspace n:=1-a_{ij} \notag  \\
\text{Sym}_{k_1,k_2,\dots,k_n}&\sum_{r=0}^{n}(-1)^r \genfrac{[}{]}{0pt}{}{n}{r} x_{ik_1}^\pm \cdots x_{ik_r}^\pm x_{jl}^\pm x_{ik_{r+1}}^\pm \cdots x_{ik_s}^\pm=0.\label{drinfeldlast}
\end{align}
 Note that Beck's paper \cite{MR1301623} on page 565 has a typo in it where he has $\phi_{i,k}z^k$ instead of $\phi_{i,-k}z^{-k}$.

In the above last relation $\text{Sym}$ means symmetrization with respect to the indices $k_1,\dots, k_n$.  Also in Drinfeld's notation one has $e^{hc/2}=\gamma$ and $e^{h/2}=q$.

The algebras given above and in \secref{jimbodrinfeld} are isomorphic \cite{MR802128}. 
%The action of the
%isomorph ism,
%which we shall call the {\it Drinfeld Isomorphism,} on the generators of
%$U_q(A_1^{(1)})$ is:
%\begin{align*}e_0 &\mapsto x^-(1)K^{-1}, \ \ f_0 \mapsto Kx^+(-1), \\
%e_1 &\mapsto x^+(0), \ \ f_1 \mapsto x^-(0), \\
%K_0 &\mapsto \gamma K^{-1}, \ \ K_1 \mapsto K, \ \ D \mapsto D.
%\end{align*}
If one uses the formal sums
\begin{equation}
\phi_i(u)=\sum_{p\in\mathbb Z} \phi_{ip}u^{-p},\enspace \psi_i(u)=\sum_{p\in\mathbb Z}\psi_{ip}u^{-p},\enspace
x_i^{\pm }(u)=\sum_{p\in\mathbb Z} x_{ip}^\pm u^{-p}
\end{equation}
Drinfeld's relations \eqnref{hs}-\eqnref{xcommutator} can be written as
\begin{gather}
[\phi_i(u),\phi_j(v)]=0=[\psi_i(u),\psi_j(v)] \\
\phi_i(u)\psi_j(v)\phi_i(u)^{-1}\psi_j(v)^{-1}=g_{ij}(uv^{-1}\gamma^{-1})/g_{ij}(uv^{-1}\gamma)  \\ 
\phi_i(u)x^\pm_j (v)\phi_i(u)^{-1}=g_{ij}(uv^{-1}\gamma^{\mp 1/2})^{\pm 1}x_j^\pm (v)\label{phix} \\
\psi_i(u)x_j^\pm (v)\psi_i(u)^{-1}=g_{ji}(vu^{-1}\gamma^{\mp 1/2})^{\mp 1}x_j^\pm (v)\label{psix} \\
(u-q^{\pm(\alpha_i|\alpha_j)} v)x^\pm_i (u)x^\pm_j (v)=(q^{\pm (\alpha_i|\alpha_j)}u-v)x_j^\pm(v)x_i^\pm(u) \\
[x_i^+(u),x_j^-(v)]=\delta_{ij}(q_i-q^{-1}_i)^{-1}(\delta(u/v\gamma)\psi_i(v\gamma^{1/2})-\delta(u\gamma/v)\phi_i(u\gamma^{1/2}))\label{xx}
\end{gather}
where $g_{ij}(t)=g_{ij,q}(t)$ is the Taylor series at $t=0$ of the function $(q^{(\alpha_i|\alpha_j)}t-1)/(t-q^{(\alpha_i|\alpha_j)})$ and $\delta(z)=\sum_{k\in\mathbb Z}z^{k}$ is the formal Dirac delta function.
%\begin{rem}
%Writing $g_{ij}(t)=g_{ij,q}(t)=\sum_{p\geq 0}g_{ij}(p)t^p$ we have
%$$g_{ij}(0)=q^{-(\alpha_i|\alpha_j)}, \quad g_{ij}(p)=(1-q^{2(\alpha_i|\alpha_j)})q^{-(p+1)(\alpha_i|\alpha_j)}, \quad p>0.$$
%Note that $g_{ij,q}(t)^{-1}=g_{ij,q^{-1}}(t)$ and $g_{ij,q}(t)=g_{ji,q}(t)$ as $(\cdot|\cdot)$ is symmetric.
%\end{rem}

%We will need the following identity later in \lemref{le-length2}:
%\begin{lem}
%\begin{equation}
%\label{quteformula}
%\text{\rm exp}\,\left((q_i-q^{-1}_i)
%\sum_{k=1}^\infty \frac{-[ka_{ij}]_i}{k}z^{-k}\right)=1+(1-q^{2a_{ij}}_i)\sum_{r=1}^\infty \left(zq^{a_{ij}}_i\right)^{-r}=q^{a_{ij}}_ig_{ij}(1/z)
%\end{equation}
%\end{lem}
%\begin{proof}
%\begin{align*}
%\text{\rm exp}\,\left((q_i-q_i^{-1})
%\sum_{k=1}^\infty \frac{-[ka_{ij}]_i}{k}z^{-k}\right)&=\text{\rm exp}\,\left(\sum_{k=1}^\infty \frac{1}{k}(zq_i^{a_{ij}})^{-k}-\sum_{k=1}^\infty \frac{1}{k}\left(\frac{z}{q^{a_{ij}}_i}\right)^{-k}\right)  \\
%&=\text{\rm exp}\,\left(\sum_{k=1}^\infty \frac{1}{k}(zq_i^{a_{ij}})^{-k}\right)\text{\rm exp}\,\left(-\sum_{k=1}^\infty \frac{1}{k}\left(\frac{z}{q^{a_{ij}}_i}\right)^{-k}\right)  \\
%&=\left(\frac{1}{1-\frac{1}{zq^{a_{ij}}_i}}\right)\left(1-\frac{q^{a_{ij}}_i}{z}\right)\\
%&=\sum_{k=0}^\infty \left(\frac{1}{zq^{a_{ij}}_i}\right)^k-q^{2a_{ij}}_i\sum_{k=1}^\infty \left(\frac{1}{zq^{a_{ij}}_i}\right)^{k}.
%\end{align*}
%\end{proof}
\subsection{}\label{drinfeldgen}

Let $U_q^+=U_q^+(\g)$ (resp. $U_q^-=U_q^-(\g)$) be the subalgebra of
$U_q(\g)$ generated by $E_i$ (resp. $F_i$), $i \in I$, and
let $U_q^0=U_q^0(\g)$ denote the subalgebra generated by
$K_i^{\pm 1}$ ($i \in I$) and $D^{\pm 1}$.

%
%Beck, \cite{MR1301623, MR1298947} has introduced a total ordering of the root
%system leading to  PBW bases for $\widehat{\mathfrak g}$ and its quantum analog,
%$U_q(\widehat{\mathfrak g})$.  We state the construction here, partially following the
%more abstract notation .  For a related PBW construction, see
%\cite{MR1230442}.

Beck in \cite{MR1301623} and \cite{MR1298947} has given a total ordering of the
root system $\D$ and a PBW like basis for $U_q(\g)$.
Below we follow the construction developed by Damiani \cite{MR1634087}, Gavarini \cite{MR1672011} and \cite{MR1317228} and
let $E_{\beta}$ denote the root vectors for each
$\beta \in \Delta_+$ counting with multiplicity for the
imaginary roots.  One defines $F_\beta=E_{-\beta}:=\Omega(E_\beta)$ for $\beta\in\Delta_+$ (refer to \eqnref{antiautomophism}).

For any affine Lie algebra $\widehat{\mathfrak g}$, there exists a map $\pi :\mathbb Z \to I$
such that, if we define
$$
\beta_k =
\begin{cases} 
&s_{\pi(0)}s_{\pi(-1)}\cdots s_{\pi(k+1)}(\alpha_{\pi(k)})
\qquad \text{ for all } k < 0, \\
&\alpha_{\pi(0)}\hskip 130pt k=0, \\
& \alpha_{\pi(1)}\hskip 130pt k=1, \\
&s_{\pi(1)}s_{\pi(2)} \cdots s_{\pi(k-1)}(\alpha_{\pi(k)})
\qquad  \text{ for all } k> 1,
\end{cases}
$$
then the map $\pi':\mathbb Z \mapsto \Delta_+^{re}$ given by $\pi'(k)=\beta_k$
is a bijection. Note that the map $\pi$, and hence the total ordering, is not unique.
We fix $\pi$ so that
$ \{ \beta_k\ |\ k \le 0\} =
\{ \alpha + n\delta \ |\ \alpha \in  \Delta_{0,+}, n \ge 0\}$ and
$\{ \beta_k\ |\ k \ge 1\} =
\{ -\alpha + n\delta\ |\ \alpha \in  \Delta_{0,+}, n>0\}$.  
One also defines the set of imaginary roots with multiplicity as
%subsets of $\Delta$ by
\begin{gather*}
%\Delta_+(k):=\begin{cases} \{\beta_r\,|\,1\leq r< k\},\quad \text{ if }k\geq 1, \\
% \{\beta_r\,|\,0\leq r> k\},\quad \text{ if }k\leq 0; \\
%\end{cases}  \\
%\Delta_+(\infty):=\left\{\beta_r\,|\, r\geq 1\right\},\quad \Delta_+(-\infty):=\left\{\beta_r\,|\, r\leq 0\right\} , \\
\Delta_+(\text{im}):=\Delta_+^{\text{im}}\times  I_0,
%\Delta_+(\tilde\infty):=\Delta_+(\infty)\cup \Delta_+(\text{im}),\quad \Delta_+(-\tilde\infty):=\Delta_+(-\infty)\cup \Delta_+(\text{im}).
\end{gather*}
where we recall $I_0=\{1,...,N\}$.
%\color{red} Is the above needed for what follows?\color{black}

It will be convenient for us to invert Beck's original ordering
of the positive roots (see \cite[\S 1.4.1]{MR1317228}).   Let
\begin{equation}
\beta_0 > \beta_{-1} > \beta_{-2}> \dots >\delta >2\delta>  \dots
>\beta_2 >\beta_1,
\end{equation}
(\cite[\S 2.1]{MR1672011} for this ordering).
We define $-\alpha < -\beta$ iff $\alpha > \beta$ for all
positive roots $\alpha, \beta$, so we obtain a corresponding ordering
on $\Delta_-$.

The following elementary observation on the ordering will play
a crucial role later.  Write $A<B$ for two sets $A$ and $B$ if
$x < y$ for all $x \in A$ and $y \in B$.  Then Beck's total
ordering of the positive roots can be divided into three sets:
$$
\{ \alpha + n\delta\ |\ \alpha \in  \Delta_{0,+}, n \ge 0\} >
\{k\delta \ |\ k>0\} > \{-\alpha+k\delta\ |\ \alpha \in   \Delta_{0,+}, k>0\}.
$$
Similarly, for the negative roots, we have,
$$
\{ -\alpha - n\delta\ |\ \alpha \in \Delta_{0,+}, n \ge 0\} <
\{-k\delta \ |\ k>0\} < \{\alpha-k\delta\ |\ \alpha \in   \Delta_{0,+}, k>0\}.
$$

The action of the braid group generators $T_i$ on the generators
of the quantum group $U_q(\widehat{\mathfrak g})$ is given by the following.
\begin{align*}
T_i(E_i) &= -F_i K_i, \qquad T_i(F_i) = -K_i^{-1}E_i, \\
T_i(E_j) &= \sum_{r=0}^{-a_{ij}} (-1)^{r-a_{ij}}
q_i^{-r}
E_i^{(-a_{ij}-r)}E_jE_i^{(r)}, \qquad \text{if } i \neq j,\\
T_i(F_j) &=  \sum_{r=0}^{-a_{ij}} (-1)^{r-a_{ij}}q_i^r
F_i^{(r)}F_jF_i^{(-a_{ij}-r)}, \qquad \text{if } i \neq j,\\
T_i(K_j) &= K_jK_i^{-a_{ij}}, \qquad
T_i(K_j^{-1}) = K_j^{-1}K_i^{a_{ij}}, \\
T_i(D) &= DK_i^{-\delta_{i,0}}, \qquad
T_i(D^{-1}) = D^{-1}K_i^{\delta_{i,0}}.
\end{align*}

For each $\beta_k \in \Delta_+^{re}$, define the root vector
$E_{\beta_k}$ in $U_q(\g)$ by
\begin{align}
E_{\beta_k} =
\begin{cases}
&T^{-1}_{\pi(0)}T^{-1}_{\pi(-1)}\cdots T^{-1}_{\pi(k+1)}(E_{\pi(k)})
\quad \text{ for all } k < 0, \\
&E_{\pi(0)}  \hskip 120ptk=0,\\
&E_{\pi(1)} \hskip 120pt k = 1, \\
&T_{\pi(1)}T_{\pi(2)} \cdots T_{\pi(k-1)}(E_{\pi(k)})
\qquad  \text{ for all } k>1.
\end{cases}\label{realrootvector}
\end{align}

The following result is due to Iwahori, Matsumoto and Tits (see \cite{MR1301623}, Section 2).
\begin{prop}
Suppose $w\in\tilde W$ and $w=\tau s_{i_1}\cdots s_{i_n}$ is a reduced decomposition in terms of simple reflections and $\tau\in T$.   Then $T_w:=\tau T_{i_1}\cdots T_{i_n}$ does not depend on the reduced decomposition of $w$ chosen, but rather only on $w$. 
\end{prop}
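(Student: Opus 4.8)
The plan is to reduce the statement to the well-known braid-group (Tits) theorem for the affine Weyl group $W$, since the generalized affine Weyl group $\tilde W = T \ltimes W$ is only the ordinary affine Weyl group enlarged by the finite group $T$ of Dynkin diagram automorphisms. First I would recall the classical result of Iwahori--Matsumoto and Tits: for $w \in W$ with two reduced decompositions $s_{i_1}\cdots s_{i_n}$ and $s_{j_1}\cdots s_{j_n}$ in terms of the simple reflections $s_0,\dots,s_N$, the two decompositions are connected by a finite sequence of \emph{braid moves}, i.e.\ replacing a consecutive block $s_i s_j s_i \cdots$ (of length $m_{ij}$, the order of $s_i s_j$) by $s_j s_i s_j \cdots$ (same length). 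It therefore suffices to check that the operators $T_i$ on $U_q(\widehat{\mathfrak g})$ satisfy the corresponding braid relations $T_i T_j T_i \cdots = T_j T_i T_j \cdots$ ($m_{ij}$ factors on each side), which is exactly the statement that the $T_i$ generate an action of the braid group $B$; this is established in Lusztig's book and in Beck \cite{MR1301623}, and may be assumed as part of the setup (the $T_i$ are defined precisely so that $B$ acts). Granting the braid relations, any two reduced words for a fixed $w \in W$ yield the same operator $T_w = T_{i_1}\cdots T_{i_n}$, independent of the chosen word.

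Next I would incorporate the diagram automorphisms. For $\tau \in T$ acting on $U_q(\widehat{\mathfrak g})$ by permuting the indices $i \mapsto \tau(i)$ (and correspondingly on $E_i, F_i, K_i$, with the prescribed action on $D$), one has the compatibility $\tau T_i \tau^{-1} = T_{\tau(i)}$ directly from the formulas for $T_i(E_j), T_i(F_j), T_i(K_j), T_i(D)$ in the excerpt, because those formulas are expressed uniformly in terms of the Cartan integers $a_{ij}$ and $\delta_{i,0}$, and $\tau$ preserves the Cartan matrix (and, for the relevant type-1 automorphisms, fixes the affine node $0$ up to the combinatorics encoded in the $\delta_{i,0}$ terms — here one uses that $T$ is the subgroup realizing $\tilde W / W$, so its action is consistent with the grading). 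Consequently, if $\tilde w = \tau w = \tau' w'$ in $\tilde W$ with $\tau, \tau' \in T$ and $w, w' \in W$, then from the semidirect product structure $\tau = \tau'$ and $w = w'$, so there is nothing to disambiguate between the $T$-part and the $W$-part separately; the only ambiguity is the choice of reduced word for $w \in W$, which the previous paragraph handled. (If instead one allows $\tau$ to be absorbed and a reduced decomposition $\tau s_{i_1}\cdots s_{i_n}$ with $l(\tilde w) = l(w) = n$, one uses the same braid-move argument, noting that $\tau$ commutes past the $s_i$'s only via $\tau s_i = s_{\tau(i)}\tau$, so all reduced words differ by braid moves conjugated by $\tau$.)

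The main obstacle, and the only genuine content, is verifying the braid relations for the $T_i$ acting on $U_q(\widehat{\mathfrak g})$ — these are nontrivial identities in the quantum group, especially the mixed relations involving the affine node and the ones of order $m_{ij} \in \{3,4,6\}$. However, since this is precisely Lusztig's theorem (and is cited as available via \cite{MR1301623}, Section 2), I would not reprove it; instead I would state it as the key input and observe that the Tits/Iwahori--Matsumoto combinatorial theorem upgrades it to the well-definedness of $T_w$. A secondary, purely bookkeeping point is checking $\tau T_i \tau^{-1} = T_{\tau(i)}$, which I expect to be a one-line verification from the explicit formulas. So the proof is essentially: (i) cite braid relations for $\{T_i\}$; (ii) cite Tits's theorem that reduced words differ by braid moves; (iii) note $\tau T_i \tau^{-1} = T_{\tau(i)}$ and the uniqueness of the $(\tau, w)$ decomposition in $\tilde W$; (iv) conclude.
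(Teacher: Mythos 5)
The paper offers no proof of this proposition at all: it is stated as a known result of Iwahori--Matsumoto and Tits, with a pointer to Section 2 of Beck's paper \cite{MR1301623}. Your argument is exactly the standard one behind that citation --- the braid relations for the $T_i$ (Lusztig/Beck), the Matsumoto--Tits theorem that any two reduced words for $w\in W$ are linked by braid moves, and the uniqueness of the factorization $\tilde w=\tau w$ in $\tilde W=T\ltimes W$ (which makes the compatibility $\tau T_i\tau^{-1}=T_{\tau(i)}$ a convenient but not strictly necessary check here) --- so it is correct and consistent with the proof the paper is implicitly invoking.
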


 Orient the Dynkin diagram of $\mathfrak g$ by defining a map $o:V\to \{\pm 1\}$ so that for adjacent vertices $i$ and $j$ one has $o(i)=-o(j)$.   Beck defines  $\widehat{T}_{\omega_i}=o(i)T_{\omega_i}$ 
and obtains (\cite[Section 4]{MR1301623}) for $i\in I_0$ and $k\in\mathbb Z$, 
\begin{align*}
x_{ik}^-:&=\widehat{T}_{\omega_i}^k(F_i), \enspace x_{ik}^+:=\widehat{T}_{\omega_i}^{-k}(E_i).
\end{align*}

Fix $i\in I_0$ and $k\geq 0$. The proposition above in the particular case of the reduced decomposition of $\omega_i=\tau s_{i_1}\cdots s_{i_r}\in \check P_0\subset \tilde W$ where $\tau$ is a diagram automorphism and the $s_i$ are simple reflections, gives
$$
x_{ik}^+=\widehat{T}_{\omega_i}^{-k}(E_i)=o(i)^k( \tau T_{i_1}\cdots T_{i_r})^{-k}(E_i)=o(i)^k  T_{j_1}\cdots T_{j_m}\tau^{-k}(E_i),
$$
for some $j_t\in I$. 

Fixing still $i\in I_0$ and $k\geq 0$, choose now $w_{\alpha_i+k\delta}\in \tilde W$, and $j\in I$, such that $w_{\alpha_i+k\delta}(\alpha_j)=\alpha_i+k\delta$.  Writing $w_{\alpha_i+k\delta}=s_{l_1}\cdots s_{l_p}$ as a reduced decomposition of simple reflections,  Beck defines 
\begin{align*}
E_{\alpha_i+k\delta}:&=T_{w_{\alpha_i+k\delta}}(E_j)=T_{l_1}\cdots T_{l_p}(E_j),
\end{align*}
which according to Lusztig is independent of the choice of $w_{\alpha_i+k\delta}$, its reduced decomposition and $j\in I$.  In particular we can choose $j=\tau^{-k}(i)$ and $w=s_{j_1}\cdots s_{j_m}$, so that $s_{j_1}\cdots s_{j_m}(\alpha_j)=s_{j_1}\cdots s_{j_m}(\alpha_{\tau^{-k}(i)})=\alpha_i+k\delta$.  Then 
\begin{equation}
E_{\alpha_i+k\delta}=T_{j_1}\cdots T_{j_m}(E_{\tau^{-k}(i)})=o(i)^k x^+_{ik}.\label{rootvector1}
\end{equation}
Now one defines
\begin{align}\label{rootvector2}
F_{\alpha_i+k\delta}%=E_{-\alpha_i-k\delta}
&=\Omega(E_{\alpha_i+k\delta})=o(i)^k  \Omega(x^+_{ik})=o(i)^k  \Omega(\widehat{T}_{\omega_i}^{-k}(E_i))  \\
&=o(i)^k  \widehat{T}_{\omega_i}^{-k}(\Omega(E_i))=o(i)^k  \widehat{T}_{\omega_i}^{-k}(F_i) =o(i)^k  x_{i,-k}^-,\notag
\end{align}
as $T_j\Omega=\Omega T_j$ and $T_\tau \Omega=\Omega T_\tau$.

If $k<0$ and $i\in I_0$, then $-\alpha_i-k\delta \in \Delta_+^{\text{re}}$, so that $-\alpha_i-k\delta=\beta_l=s_{\pi(1)}\cdots s_{\pi(l-1)}(\alpha_{\pi(l)})$ for $l> 1$ and $-\alpha_i-k\delta=\beta_l= \alpha_{\pi(1)}$ if $l=1$. Then for $l>1$,
\begin{align}
E_{-\alpha_i-k\delta}&=E_{\beta_l}=T_{\omega_i}^{-k}T_i^{-1}(E_i)=-T_{\omega_i}^{-k}(K_i^{-1}F_i) \label{rootvector3}
\\
&=-o(i)^kT_{\omega_i}^{-k}(K_i^{-1})x_{i,-k}^-=-o(i)^k K_i^{-1}\gamma^{-k}x_{i,-k}^-\notag
\end{align}
as $\omega_i(-\alpha_i)=-\alpha_i+\delta$ (see \secref{notation}) so that $\omega_i^{-k}s_i(\alpha_i)=\omega_i^{-k}(-\alpha_i)=-\alpha_i-k\delta$ and $T_{\omega_i}(K_i^{-1})=K_{-\alpha_i+\delta}$.  Now 
\begin{align}\label{rootvector4}
F_{-\alpha_i-k\delta}=\Omega(E_{-\alpha_i-k\delta})
&=-o(i)^k \Omega(K_i^{-1}\gamma^{-k}x_{i,-k}^-)\\ 
&=-o(i)^k K_i \gamma^{k} x_{i,k}^+.\notag
\end{align}

He also defines for $k>0$
\begin{align*} 
\psi_{ik}&=(q_i-q_i^{-1})\gamma^{k/2}[E_i,\hat T_{\omega_i}^k(F_i)] =(q_i-q_i^{-1})\gamma^{k/2}[E_i, x_{ik}^-], \\
\phi_{i,-k}&=(q_i-q_i^{-1})\gamma^{-k/2}[F_i,\hat T_{\omega_i}^kE_i]=(q_i-q_i^{-1})\gamma^{-k/2}[F_i, x_{i,-k}^+],
\end{align*}
$\psi_{i,0}:=K_i$, $\phi_{i,0}:=K_i^{-1}$, 
and for any $\tau \in T$, 
\begin{equation}
T_\tau(E_i):=E_{\tau(i)}, \quad 
T_\tau(F_i):=F_{\tau(i)},\quad T_\tau(K_i):=K_{\tau(i)}\label{tau}.
\end{equation}
One writes $\tau$ for $T_\tau$.
Note also that $\tau s_i\tau^{-1}=s_{\tau(i)}$ for all $0\leq i\leq n$.  

Each real root space is 1-dimensional, but each imaginary root
space is $N$-dimensional.  Hence, for each positive imaginary
root $k\delta$ ($k>0$) we define $N$ imaginary root vectors,
$E_{k \delta}^{(i)}$ ($i\in  I_0$) by
\begin{align}
\exp\left(
(q_i-q^{-1}_i)\sum_{k=1}^{\infty}E_{k \delta}^{(i)}z^k\right) &=
1 + (q_i-q^{-1}_i)\sum_{k=1}^{\infty} K_i^{-1}[E_i, x_{i,k}^-]z^k\label{imaginaryrootvectordef}
 \\
&=1 +  \sum_{k=1}^{\infty} K_i^{-1} \psi_{ik}\left(\gamma^{-1/2}z\right)^k  \notag  \\
&=  \exp\left( (q_i-q^{-1}_i)\sum_{l>0}  h_{il}\gamma^{-l/2}z^{l}\right). \notag 
 \end{align}
 So $E_{k\delta}^{(i)}=h_{ik}\gamma^{-k/2}$ for all $k>  0$.  For $k<0$ we also define $E_{k\delta}^{(i)}:=\Omega(E_{-k\delta}^{(i)})=h_{ik}\gamma^{k/2}$. Our definition of $E_{k\delta}^{(i)}$ is the same as  \cite{{MR1634087}}, Definition 7.  In particular
 \begin{align*}
  K_i^{-1}[E_i, x_{i,k}^-]&= -o(i)^k \gamma^{-k}K_i^{-1}\left[E_i, K_i E_{-\alpha_i+k\delta} \right]  \\
  &= -o(i)^k \gamma^{-k}K_i^{-1}\left(E_i K_i E_{-\alpha_i+k\delta}-K_i E_{-\alpha_i+k\delta}E_i \right) \\
 &= -o(i)^k \gamma^{-k}\left(K_i^{-1}E_i K_i E_{-\alpha_i+k\delta}- E_{-\alpha_i+k\delta}E_i \right) \\
 &= -o(i)^k \gamma^{-k}\left(q_i^{-2}E_i  E_{-\alpha_i+k\delta}- E_{-\alpha_i+k\delta}E_i \right) .
 \end{align*}
% \color{red} 
% Is the $E_{k\delta}^{(i)}$ the same in Beck's as in Ilaria's?
% \color{black}

 Using these sets and a symbol $*\in \{\pm \infty,\pm\tilde \infty,\text{im}\}$, one defines $U_q^+(*)$ as the subalgebra of $U_q$ generated by $\{E_\alpha\,|\,\alpha\in \Delta_+(*)\}$ and 
\begin{equation}\label{damianiu}
U_q^{\geq 0}(*):=U^+_q(*)U_q^0,\quad U_q^-(*):=\Omega(U^+_q(*)),\quad U_q^{\leq 0}(*):=\Omega(U_+^{\geq 0}(*)).
\end{equation}

Recall that the $R$-``matrices" are defined having values in $U_q(\widehat{\mathfrak g})\widehat{\otimes} U_q(\widehat{\mathfrak g})$  (see \cite{MR94m:17016} for the definition of $U_q(\widehat{\mathfrak g})\widehat{\otimes} U_q(\widehat{\mathfrak g})$ and  \cite{MR1301623}, Section 5) for $1\leq i\leq N$ by 
\begin{align}
R_i&=\sum_{n\geq 0}(-1)^nq_i^{\frac{-n(n-1)}{2}}(q_i-q_i^{-1})^n[n]_i!T_i(F_i^{(n)})\otimes T_i(E_i^{(n)}),  \label{ri}\\ 
&=\sum_{n\geq 0} \frac{(q_i^{-1}-q_i)^nq_i^{\frac{-n(5n-1)}{2}}}{[n]_i!}E_i^{n}K_i^{-n}\otimes F_i^{n}K_i^n, \notag\\ \notag  \\
\bar{R}_i&=T_i^{-1}\otimes T_i^{-1}\circ R_i^{-1}=\sum_{n\geq 0}q_i^{\frac{n(n-1)}{2}}(q_i-q_i^{-1})^n[n]_i!F_i^{(n)}\otimes E_i^{(n)}.\label{barri}
\end{align}
These operators have inverses
\begin{align*}
R_i^{-1}
&=\sum_{n\geq 0} \frac{(q_i-q_i^{-1})^nq_i^{\frac{-n(3n+1)}{2}}}{[n]_i!}E_i^{n}K_i^{-n}\otimes F_i^{n}K_i^n \\  \\
\bar{R}_i^{-1}&=\sum_{n\geq 0}\frac{q_i^{\frac{-n(n-1)}{2}}(q_i^{-1}-q_i)^n}{[n]_i!}F_i^{n}\otimes E_i^{n}
\end{align*}

%One then has the following result :
%\begin{prop}[\cite{MR94m:17016}, Prop. 37.3.1]\label{lusztigdiagt} For $u\in U_q$ one has 
%\begin{enumerate}
%\item 
%$$
%\Delta(T_i(u))=R_i^{-1}(S_i \circ \Delta)(u)R_i,
%$$\label{deltat}
%\item 
%$$
%\Delta(T_i^{-1}(u))=\bar{R}_i^{-1}\left(S_i^{-1} \circ \Delta\right)(u)\bar R_i,
%$$
%where $S_i:=T_i\otimes T_i$.
%
%\end{enumerate}
%\end{prop}

Suppose $w\in \tilde W$ and $\tau s_{i_1}\cdots s_{i_r}$ is a reduced presentation for $w$ where $\tau$ is defined as in \eqnref{tau}.   Beck defines the following ``$R$-matrices":
\begin{align}
R_w&=\tau (S_{i_1}S_{i_2}\cdots S_{i_{r-1}}(R_{i_r})\cdots S_{i_1}(R_{i_2})R_{i_1}),\label{rw}\\ 
\bar R_w&=\tau (S_{i_r}^{-1}S_{i_{r-1}}^{-1}\cdots S_{i_{2}}^{-1}(R_{i_1})\cdots S_{i_r}^{-1}(\bar R_{i_{r-1}})\bar R_{i_r}).  \label{barrw}
\end{align}
%\begin{prop}[\cite{MR1301623}, Lemma 5.2 and Prop. 5.3] Let $1\leq i\in N$, $k\geq 0$. 
%\begin{enumerate}[(a).]
%\item 
%For $w\in W$, the elements $R_w,\bar R_w$ are well defined elements of $U_q(\widehat{\mathfrak g})\widehat{\otimes} U_q(\widehat{\mathfrak g})$. 
%\item  If $w=k\omega_i$ and $k>0$, then
%$$
%\Delta(x_{ik}^-)=R_w^{-1}(x_{ik}^-\otimes K_{-\alpha_i+k\delta}+1\otimes x_{ik}^-)R_w ,
%$$
%\item If $w=k\omega_i$ and $k>0$, then
%$$
%\Delta(x_{i,-k}^-)=\bar{R}_w^{-1}(x_{i,-k}^-\otimes K_{-\alpha_i-k\delta}^{-1}+1\otimes x_{i,-k}^-)\bar R_w .
%$$
%\end{enumerate}
%\end{prop}

\color{black}
Using the root partition $ S = \{\alpha + k\delta  \ |\alpha\in \Delta_{0,+},\ k \in \mathbb Z \} \cup
\{ l\delta \ |\ l \in \mathbb Z_{>0}\}$ from Section 2.3, we define:
 \vskip 5pt
$U_q^+(S)$ to be the subalgebra of $U_q(\widehat{\mathfrak g})$ generated by $x^+_{i,k}$,  $(1\leq i\leq N, k\in \mathbb Z)$ and
$h_{i,l}$ $(1\leq i\leq N,\ l>0)$;
 \vskip 5pt
$U_q^-(S)$ to be the subalgebra of $U_q(\widehat{\mathfrak g})$ generated by $x^-_{i,k}$
$(1\leq i\leq N, k\in \mathbb Z)$ and
$h_{i,-l}$ $(1\leq i\leq N,\ l>0)$, and
\vskip 5pt
$U_q^0(S)$ to be the subalgebra of $U_q(\widehat{\mathfrak g})$ generated by $K^{\pm 1}_i$  $(1\leq i\leq N)$,
$\gamma^{\pm 1/2}$, and $D^{\pm 1}$.   Thus $U_q^0(S)=U_q^0(\hat{\mathfrak g})$.

\subsection{}

Let $\omega$ denote the standard $\mathbb C(q^{1/2})$-linear antiautomorphism of
$U_q(\widehat{\mathfrak g})$, and set $E_{-\alpha} = \omega(E_{\alpha})$ for all $\alpha \in \Delta_+$.
Then $U_q$ has a basis of elements of the form $E_-HE_+$,
where $E_{\pm}$ are ordered monomials in the $E_{\alpha}$,
$\alpha\in \Delta_{\pm}$,  and $H$ is a monomial in $K_i^{\pm 1}$, $\gamma^{\pm 1/2}$,
and $D^{\pm 1}$ (which all commute).

Furthermore, this basis is, in Beck's terminology, convex, meaning
that, if $\alpha, \beta \in \Delta_+$ and $ \beta >  \alpha$, then
\begin{equation}\label{convexity}
E_{\beta}E_{\alpha} - q^{(\alpha|\beta)}E_{\alpha}E_{\beta} =
\sum_{\alpha <\gamma_1<\dots <\gamma_r <\beta}c_{\gamma}
E_{\gamma_1}^{a_1}\cdots E_{\gamma_r}^{a_r}
\end{equation}
for some integers $a_1,\dots, a_r$ and scalars
$c_{\gamma}\in \mathbb C[q, q^{-1}]$,
$\gamma = (\gamma_1,\dots,\gamma_r)$ \cite[Proposition 1.7c]{MR1317228}, \cite{MR1120927}, and
similarly for the negative roots.  The above is called the {\it Levendorski and So{\u\i}belman convexity formula}.

Set $\mathbb A= \mathbb C[q^{1/2},q^{-1/2}, \frac{1}{[n]_{q_i}}, i\in I, n>1]$.   We first begin with a slightly different $\mathbb A$-form than in \cite{MR1662112}. Namely we define this algebra $U_{\mathbb A}=U_{\mathbb A}(\widehat{\mathfrak g})$ to be the $\mathbb A$-subalgebra
of $U_q(\widehat{\mathfrak g})$ with 1 generated by the elements 
$$  
x_{ir}^{\pm 1},\enspace h_{is}, \enspace K_i^{\pm 1}, \enspace \gamma^{\pm 1/2},D^{\pm 1} ,  \Lnum{K_i}{s}{n},\Lnum{D}{s}{n},\Lnum{\gamma}{s}{1}, \Lnum{\gamma\psi_i}{k,l}{1}
$$
for $1\leq i\leq N,r,s\in\mathbb Z,s\neq 0$ where following \cite{MR954661}, for each $i \in I$, $s \in \mathbb Z$ and $n \in  \mathbb Z_+$,
we define the {\it Lusztig elements} in $U_q(\g)$:
\begin{align}
\Lnum{\gamma}{s}{1}_i&= \frac{\gamma^{s}-\gamma^{-s}}{q_i-q^{-1}_i},\\
\Lnum{\gamma\psi_i}{k,l}{1}&= \frac{\gamma^{\frac{k-l}{2}}\psi_{i,k+l} -
    \gamma^{\frac{l-k}{2}}\phi_{i,k+l}}{q_i-q^{-1}_i} \\ 
\Lnum{K_i}{s}{n} &=
\prod_{r=1}^n
\frac{K_iq_i^{s-r+1} - K_i^{-1}q_i^{-(s-r+1)}}{q_i^r-q_i^{-r}}, \quad \text{and}\\
\Lnum{D}{s}{n} &= \prod_{r=1}^n
\frac{Dq_0^{s-r+1} - D^{-1}q_0^{-(s-r+1)}}{q_0^r-q_0^{-r}}.
\end{align}
where $q_0=q^{d_0}$. 
  This $\mathbb A$-form can be shown to be the same as that in  \cite{MR1662112} with the exception that we have added the generators $\gamma^{\pm 1/2}$, $\Lnum{\gamma}{s}{1}_i$ and $\Lnum{\gamma\psi_i}{k,l}{1}$. 
  Let $U_{\mathbb A}^+$ (resp. $U_{\mathbb A}^-$) denote the subalgebra of
$U_{\mathbb A}$ generated by the $x_{ik}^+$, $h_{il}$, where $k\in\mathbb Z$, $l\in\mathbb Z_{>0}$, $1\leq i\leq N$ (resp. $x_{ik}^-$, $h_{il}$, where $k\in\mathbb Z$, $l\in-\mathbb Z_{\geq 0}\backslash\{0\}$, $1\leq i\leq N$ ), $i \in I$, and
let $U_{\mathbb A}^0$ denote the subalgebra of $U_{\mathbb A}$ generated
by the elements $\gamma^{\pm 1/2},K_i^{\pm 1}, \Lnum{K_i}{s}{n}$,
$ D^{\pm 1}, \Lnum{D}{s}{n}$,
$\Lnum{\gamma}{s}{1}_i$, $ \Lnum{\gamma\psi_i}{k,-k}{1}$.
Note if $k+l>0$ (resp. $k+l<0$), then $ \Lnum{\gamma\psi_i}{k,l}{1}\in U_{\mathbb A}^+$ (resp. $ \Lnum{\gamma\psi_i}{k,l}{1}\in U_{\mathbb A}^-$).
\begin{lem}  \cite[Prop. 3.10, Lemma 3.15]{MR1301623}\label{psix2} Set $a=q_i^2\gamma^{1/2}$, $b=q_i^2\gamma^{-1/2}$, $c=-q_i^{a_{ij}}\gamma^{1/2}$, $d=-q_i^{a_{ij}}\gamma^{-1/2}$ for $i\neq j$, $r>0$, $m\in\mathbb Z$.  Then
\begin{align*}
[\psi_{ir},x_{im}^-]&=-\gamma^{1/2}[2]_i\left(\sum_{k=1}^{r-1}a^{k-1}(q_i-q_i^{-1})\psi_{i,r-k}x_{i,m+k}^-+a^{r-1}x_{i,m+r}^-\right)  \\
[\psi_{ir},x_{im}^+]&=\gamma^{-1/2}[2]_i\left(\sum_{k=1}^{r-1}b^{k-1}(q_i-q_i^{-1})x_{i,m+k}^+\psi_{i,r-k}+b^{r-1}x_{i,m+r}^+\right)  \\ 
[\psi_{ir},x_{jm}^-]&=\gamma^{1/2}[a_{ij}]_i\left(\sum_{k=1}^{r-1}c^{k-1}(q_i-q_i^{-1})\psi_{i,r-k}x_{j,m+k}^-+a^{r-1}x_{j,m+r}^-\right)   \\
[\psi_{ir},x_{jm}^+]&=-\gamma^{-1/2}[a_{ij}]_i\left(\sum_{k=1}^{r-1}d^{k-1}(q_i-q_i^{-1})x_{j,m+k}^+\psi_{i,r-k}+d^{r-1}x_{j,m+r}^+\right) .
\end{align*}
\end{lem}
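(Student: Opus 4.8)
The plan is to prove all four identities at once by passing to generating series. Put $\psi_i(z)=\sum_{k\ge 0}\psi_{ik}z^{k}$ as in \eqnref{phidef} and $x_j^{\pm}(w)=\sum_{m\in\mathbb Z}x_{jm}^{\pm}w^{-m}$. The first step is to record the functional equation
\[
\psi_i(z)\,x_j^{\pm}(w)=q^{\pm(\alpha_i|\alpha_j)}\,\frac{1-q^{\mp(\alpha_i|\alpha_j)}\gamma^{\mp1/2}zw}{1-q^{\pm(\alpha_i|\alpha_j)}\gamma^{\mp1/2}zw}\;x_j^{\pm}(w)\,\psi_i(z),
\]
which is a rewriting of \eqnref{psix}. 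The second step is to expand the right-hand rational function as a geometric series in $zw$, substitute it in, and compare the coefficient of $z^{r}w^{-m}$ on the two sides. Because $\psi_i(z)$ involves only non-negative powers of $z$, each such coefficient is a finite sum $\sum_{k=1}^{r}$, and a small rearrangement puts it into exactly the form displayed in the lemma. In that sum the terms $k=1,\dots ,r-1$ carry $\psi_{i,r-k}$, while the $k=r$ term carries $\psi_{i,0}=K_i$.

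If one prefers not to quote \eqnref{psix}, the functional equation is itself easy to derive from \eqnref{axcommutator}. Write $\psi_i(z)=K_i\exp\!\big(H_i(z)\big)$ with $H_i(z)=(q_i-q_i^{-1})\sum_{l>0}h_{il}z^{l}$ (this is \eqnref{phidef}). Since $\gamma^{\pm1/2}$ is central, \eqnref{axcommutator} gives $[H_i(z),x_j^{\pm}(w)]=\lambda\,x_j^{\pm}(w)$ for a \emph{central} scalar power series $\lambda=\lambda_{ij}^{\pm}(z,w)$, so $\text{ad}_{H_i(z)}$ acts on $x_j^{\pm}(w)$ by the scalar $\lambda$ and hence $\exp\!\big(H_i(z)\big)\,x_j^{\pm}(w)\,\exp\!\big(-H_i(z)\big)=\exp(\lambda)\,x_j^{\pm}(w)$. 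Summing $(q_i-q_i^{-1})\sum_{l>0}\frac{[la_{ij}]_i}{l}t^{l}=\log\frac{1-q^{-(\alpha_i|\alpha_j)}t}{1-q^{(\alpha_i|\alpha_j)}t}$ evaluates $\exp(\lambda)$ as the above rational function, and conjugating through $K_i$ (using $K_ix_{jm}^{\pm}K_i^{-1}=q^{\pm(\alpha_i|\alpha_j)}x_{jm}^{\pm}$) produces the scalar prefactor. Note that $i=j$ is not a separate case: it is just $a_{ii}=2$, which is exactly why $[2]_i$ and the base $a=q_i^{2}\gamma^{1/2}$ (resp.\ $b=q_i^{2}\gamma^{-1/2}$) replace $[a_{ij}]_i$ and $c=-q_i^{a_{ij}}\gamma^{1/2}$ (resp.\ $d=-q_i^{a_{ij}}\gamma^{-1/2}$) there.

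The one place needing care is the top summand $k=r$: there $\psi_{i,0}=K_i$, and pushing this $K_i$ past $x_{j,m+r}^{\pm}$ — once more via $K_ix_{jm}^{\pm}K_i^{-1}=q^{\pm(\alpha_i|\alpha_j)}x_{jm}^{\pm}$ — is what changes the base $c$ into $a$ (resp.\ $d$ into $b$) in the last term; this must also be combined with the orientation factors $o(i)^{\bullet}$ that accompany $x_{ik}^{\pm}=\widehat T_{\omega_i}^{\mp k}(\,\cdot\,)=o(i)^{k}T_{\omega_i}^{\mp k}(\,\cdot\,)$ and the definition of $\psi_{ik}$ through $\widehat T_{\omega_i}$, which is where the minus signs in $c$ and $d$ come from. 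I expect this bookkeeping — tracking the $o(i)^{\bullet}$'s and matching the geometric-series coefficients $a^{k-1},b^{k-1},c^{k-1},d^{k-1}$ against the Fourier extraction, including the anomalous base of the $K_i$-term — to be the only real obstacle; there is no deep conceptual point. An equivalent but messier route, if one wants to avoid generating series entirely, is a direct induction on $r$ using the recursion $r\,\psi_{ir}=(q_i-q_i^{-1})\sum_{l=1}^{r}l\,\psi_{i,r-l}h_{il}$ (obtained by applying $z\partial_z$ to \eqnref{phidef}) together with \eqnref{axcommutator}; this introduces $1/l$ denominators and extra case analysis, so I would take the generating-series argument above. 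Throughout, the argument runs parallel to Beck \cite[Prop.\ 3.10, Lemma\ 3.15]{MR1301623}.
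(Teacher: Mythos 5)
The paper offers no argument for this lemma at all (it is transcribed from Beck), so the only question is whether your derivation actually lands on the stated formulas, and it does not. Your functional equation is correct, as is its derivation from \eqnref{phidef} and \eqnref{axcommutator}, but extracting the coefficient of $z^{r}w^{-m}$ from it yields a $q$-commutator identity: for $j=i$ one gets $\psi_{ir}x^{+}_{im}-q_i^{2}x^{+}_{im}\psi_{ir}=[2]_i(q_i-q_i^{-1})\sum_{k=1}^{r}q_i^{2k}\gamma^{-k/2}x^{+}_{i,m+k}\psi_{i,r-k}$, in which the $k=r$ summand still carries $\psi_{i0}=K_i$ and the factor $(q_i-q_i^{-1})$. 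That is not the statement: the lemma asserts a plain commutator whose last term is the bare $b^{r-1}x^{+}_{i,m+r}$. Commuting $K_i$ past $x^{+}_{i,m+r}$ only produces a scalar --- it cannot delete the $K_i$, cannot supply the missing $(q_i-q_i^{-1})^{-1}$, and a fortiori cannot change the base of a geometric progression (your proposed explanation of $a^{r-1}$ versus $c^{r-1}$ in the third identity; that $a^{r-1}$ is a misprint for $c^{r-1}$, not something to be derived). A sanity check at $r=1$ shows the gap is real: with the normalization \eqnref{phidef} one has $\psi_{i1}=(q_i-q_i^{-1})K_ih_{i1}$, and \eqnref{axcommutator} gives $[\psi_{i1},x^{+}_{im}]=(q_i-q_i^{-1})\bigl((q_i^{2}-1)x^{+}_{im}K_ih_{i1}+[2]_i\gamma^{-1/2}K_ix^{+}_{i,m+1}\bigr)$, not $\gamma^{-1/2}[2]_i x^{+}_{i,m+1}$. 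The displayed formulas hold for Beck's normalization of the $\psi$'s (in effect $K_i^{-1}\psi_{ir}/(q_i-q_i^{-1})$, i.e.\ the coefficients of the exponential of the $h_{il}$'s suitably rescaled); any proof must make this identification explicit and carry it through, whereas your write-up uses \eqnref{phidef} literally and asserts the coefficients match ``exactly''.

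Two further items in the bookkeeping you defer are where the remaining content sits, and your sketch gets both wrong. First, the ordering is not cosmetic: your method puts every $\psi_{i,r-k}$ on the same side of the $x$'s, while the lemma has $\psi_{i,r-k}x^{-}_{\cdot}$ in the first and third identities and $x^{+}_{\cdot}\psi_{i,r-k}$ in the second and fourth; moving the $\psi$'s across the $x$'s changes the base of the expansion (e.g.\ from $q_i^{-2}\gamma^{1/2}$ to $a=q_i^{2}\gamma^{1/2}$ in the first identity), so the side must be chosen and justified in each case. Second, your remark that $i=j$ ``is not a separate case'' contradicts the statement itself: a uniform expansion of your rational function gives bases $q_i^{\pm a_{ij}}\gamma^{\mp1/2}$ with no minus sign for all $i,j$, so it can never produce the stated mixture in which only the $i\neq j$ bases $c,d$ are negated. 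Those signs do not come out of \eqnref{axcommutator} or \eqnref{psix} at all --- the orientation factors $o(i)$ are already absorbed into the Drinfeld generators appearing there --- but reflect the conventions for root vectors in Beck's paper from which the formulas are copied. So the generating-series strategy is the right one, but as written the argument would terminate in formulas differing from the lemma in the commutator itself, in the last term, in the orderings, and in the signs of $c$ and $d$: precisely the points you set aside as routine.
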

The anti-automorphism $\Omega$ sends $\psi_{ik}$ to $\phi_{ik}$ and since $\Omega T_i=T_i\Omega$, $\Omega$ sends $x_{i,m}^\pm$ to $x_{i,m}^\mp$ (see \cite[Section 1 and 3.11]{MR1301623} and \cite{MR94m:17016}).  Thus one gets a similar set of commutator formulas for $\phi_{ir}$, but since we don't need them explicitly we will not write them down. 

The next result follows from direct calculations using Lusztig's elements. 
\begin{prop} \label{Arelns}
The following commutation relations hold between the generators
of $U_{\mathbb A}$.  For $k\in\mathbb Z$, $l\in\mathbb Z_{> 0}$, $1\leq i,j\leq N$,
\begin{align*}
x_{ik}^+ \Lnum{K_j}{s}{n} & = \Lnum{K_j}{s-a_{ji}}{n}x_{ik}^+, \\
x_{ik}^+ \Lnum{D}{s}{n} &= \Lnum{D}{s-k}{n}x_{ik}^+, \\
\Lnum{K_j}{s}{n}x_{ik}^- &= x_{ik}^- \Lnum{K_j}{s-a_{ji}}{n}, \\
\Lnum{D}{s}{n}x_{ik}^- &= x_{ik}^- \Lnum{D}{s-k}{n},\\
\Lnum{D}{s}{n}h_{ik} &= h_{ik} \Lnum{D}{s+k}{n},\\
[\gamma^{\pm 1/2},U_{\mathbb A} ]&=[D,K_i^{\pm 1}]=[K_i,K_j]=[K_i,h_{jk}]=0, \\
Dh_{ir} &=q^rh_{ir}D,\quad Dx_{ir}^{\pm} =q^rx_{ir}^{\pm}D,\\
K_ix_{jr}^{\pm} &= q_i^{\pm  (\alpha_i|\alpha_j)}x_{jr}^{\pm}K_i,   \\  
[h_{ik},h_{jl}]&=\delta_{k,-l} \frac{1}{k}[ka_{ij}]_i\Lnum{\gamma}{k}{1}_i \\
[h_{ik},x^{\pm}_{jl}]&= \pm \frac{1}{k}[ka_{ij}]_i\gamma^{\mp |k|/2}x^{\pm}_{j,k+l},  \\
[x^+_{ik},x^-_{jl}]&=\delta_{ij}
    \Lnum{\gamma\psi_i}{k,l}{1}.
\end{align*}

\end{prop}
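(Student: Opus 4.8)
The plan is to reduce every line of the proposition to the Drinfeld presentation of \secref{secondrealization} together with the definitions of the Lusztig elements; no new ingredient is needed beyond careful bookkeeping of powers of $q$. First, the relations that do not involve any Lusztig element --- namely $[\gamma^{\pm 1/2},U_{\mathbb A}]=[D,K_i^{\pm 1}]=[K_i,K_j]=[K_i,h_{jk}]=0$, $Dh_{ir}=q^rh_{ir}D$, $Dx_{ir}^{\pm}=q^rx_{ir}^{\pm}D$, $K_ix_{jr}^{\pm}=q_i^{\pm(\alpha_i|\alpha_j)}x_{jr}^{\pm}K_i$, and $[h_{ik},x^{\pm}_{jl}]=\pm\frac{1}{k}[ka_{ij}]_i\gamma^{\mp|k|/2}x^{\pm}_{j,k+l}$ --- are exactly relations \eqnref{drinfeldfirst}--\eqnref{axcommutator} (up to transposing $D$, resp.\ $K_i$, past a generator), so they hold in $U_q(\widehat{\mathfrak g})$ and therefore in the subalgebra $U_{\mathbb A}$; one only needs to note that every element occurring is among the listed generators of $U_{\mathbb A}$. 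Similarly, $[h_{ik},h_{jl}]=\delta_{k,-l}\frac{1}{k}[ka_{ij}]_i\Lnum{\gamma}{k}{1}_i$ and $[x^+_{ik},x^-_{jl}]=\delta_{ij}\Lnum{\gamma\psi_i}{k,l}{1}$ are just \eqnref{hs} and \eqnref{xcommutator} with the right-hand sides rewritten through the defining formulas $\Lnum{\gamma}{k}{1}_i=(\gamma^k-\gamma^{-k})/(q_i-q_i^{-1})$ and $\Lnum{\gamma\psi_i}{k,l}{1}=(\gamma^{(k-l)/2}\psi_{i,k+l}-\gamma^{(l-k)/2}\phi_{i,k+l})/(q_i-q_i^{-1})$, so there is nothing further to do.

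Thus the only relations carrying content are the four involving $\Lnum{K_j}{s}{n}$ and $\Lnum{D}{s}{n}$, and each is obtained by moving one Drinfeld generator through the defining product one factor at a time. For the first, the $K$--$x$ commutation rule shows that $x^+_{ik}$ carries $K_j$ to $q_j^{-a_{ji}}K_j$ and $K_j^{-1}$ to $q_j^{a_{ji}}K_j^{-1}$, so that
\begin{equation*}
x^+_{ik}\,\frac{K_jq_j^{\,s-r+1}-K_j^{-1}q_j^{-(s-r+1)}}{q_j^{\,r}-q_j^{-r}}
=\frac{K_jq_j^{\,(s-a_{ji})-r+1}-K_j^{-1}q_j^{-((s-a_{ji})-r+1)}}{q_j^{\,r}-q_j^{-r}}\,x^+_{ik},
\end{equation*}
and multiplying these identities for $r=1,\dots,n$ yields $x^+_{ik}\Lnum{K_j}{s}{n}=\Lnum{K_j}{s-a_{ji}}{n}x^+_{ik}$. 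The companion relation for $x^-_{ik}$ follows in the same way, or by applying the anti-automorphism $\Omega$, which fixes each factor of $\Lnum{K_j}{s}{n}$ (since $\Omega(K_jq_j^{\,t})=q_j^{-t}K_j^{-1}$ and $\Omega(q_j^{\,r}-q_j^{-r})=-(q_j^{\,r}-q_j^{-r})$) and interchanges $x^+$ with $x^-$ (see \eqnref{antiautomophism} and \eqnref{rootvector2}). The two $\Lnum{D}{s}{n}$-relations run identically, using $Dx_{ir}^{\pm}=q^rx_{ir}^{\pm}D$ and $Dh_{ir}=q^rh_{ir}D$: transporting $x^{\pm}_{ir}$ (resp.\ $h_{ir}$) across a factor rescales the occurrences of $D^{\pm 1}$ in it, which amounts to translating the integer parameter $s$ in that factor by the amount recorded in the statement.

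The argument presents no real obstacle --- it is simply a matter of transporting one generator across a finite product and tracking $q$-exponents --- but two points deserve a word. First, in the $\Lnum{D}{s}{n}$ case the translation of $s$ comes from equating a power of $q$ with a power of $q_0$, which uses $q_0=q$, i.e.\ $d_0=1$; this holds precisely because $\widehat{\mathfrak g}$ is of type $1$ (there $\alpha_0=\delta-\theta$ with $\theta$ long, so $(\alpha_0|\alpha_0)=(\theta|\theta)=2$ and hence $d_0=1$). Second, for the statement to be internal to $U_{\mathbb A}$ one records that all the Lusztig elements $\Lnum{K_j}{s}{n}$, $\Lnum{D}{s}{n}$, $\Lnum{\gamma}{s}{1}_i$ and $\Lnum{\gamma\psi_i}{k,l}{1}$ occurring on the right-hand sides belong to $U_{\mathbb A}$ by definition.
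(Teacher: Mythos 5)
Your proposal is correct and is exactly the "direct calculation using the Lusztig elements" that the paper itself invokes without writing out: the non-Lusztig relations are verbatim Drinfeld relations, and the $\Lnum{K_j}{s}{n}$, $\Lnum{D}{s}{n}$ relations follow by commuting one generator through each factor of the product, just as you do. Your side remark that the $\Lnum{D}{s}{n}$ shifts need $q_0=q$ is a fair observation about a convention the paper glosses over (with the paper's "relatively prime integer symmetrizer" normalization $d_0$ need not be $1$ outside the simply-laced case), but this is an issue with the paper's stated conventions rather than a gap in your argument.
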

%	\begin{proof}
%	These relations all follow from the defining relations of $U_q(\widehat{\widehat{\mathfrak g}})$, and the definition of the Lusztig elements including $ \Lnum{\gamma}{s}{n}_i$ and $\Lnum{\gamma\psi_i}{k,l}{1}$.   Note that by \eqnref{phidef}, the elements $\Lnum{\gamma\psi_i}{k,l}{1}$ are in $U_q(\widehat{\mathfrak g})$.    For example the first identity above gives us
%	\begin{align*}
%	x_{ik}^+\prod_{r=1}^n
%	\frac{K_jq_j^{s-r+1} - K_j^{-1}q_j^{-(s-r+1)}}{q_j^r-q_j^{-r}}
%	&=\prod_{r=1}^n\frac{K_jq_j^{s-r+1}q^{-(\alpha_i|\alpha_j)} - K_j^{-1}q_j^{-(s-r+1)}q^{(\alpha_i|\alpha_j)}}{q_j^r-q_j^{-r}}x_{ik}^+ \\
%	&=\prod_{r=1}^n\frac{K_jq_j^{s-r+1-a_{ji}} - K_j^{-1}q_j^{-(s-r+1-a_{ji})} }{q_j^r-q_j^{-r}}x_{ik}^+
%	\end{align*}
%	due to the fact that $(\alpha_i|\alpha_j)=d_ia_{i,j}=d_ja_{j,i}$.

%	 \end{proof}
 
\begin{cor} \label{Arelnscor}The algebra $U_{\mathbb A}$
inherits the standard triangular decomposition of $U_q(\hat{\mathfrak g})$.  
In particular,
any element $u $ of $U_{\mathbb A}$ can be written as an $\mathbb A$-linear combination of
monomials of the form $u^-u^0u^+$ where $u^{\pm} \in U_{\mathbb A}^{\pm}$
and $u^0 \in U_{\mathbb A}^0$.
\end{cor}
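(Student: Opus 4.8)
The plan is to introduce the $\mathbb A$-submodule $M\subseteq U_{\mathbb A}$ spanned over $\mathbb A$ by all products $u^-u^0u^+$ with $u^{\pm}\in U_{\mathbb A}^{\pm}$, $u^0\in U_{\mathbb A}^0$, and to show that $M$ is a subalgebra. Since each of $U_{\mathbb A}^{+},U_{\mathbb A}^{0},U_{\mathbb A}^{-}$ is by definition a subalgebra and $M\supseteq U_{\mathbb A}^{+}\cup U_{\mathbb A}^{0}\cup U_{\mathbb A}^{-}$, the module $M$ contains $1$ together with every algebra generator of $U_{\mathbb A}$ listed in \secref{drinfeldgen} (here one uses the observation recorded just before \lemref{psix2} that $\Lnum{\gamma\psi_i}{k,l}{1}$ lies in $U_{\mathbb A}^{+}$, $U_{\mathbb A}^{0}$ or $U_{\mathbb A}^{-}$ according as $k+l$ is positive, zero or negative, and likewise $h_{is}\in U_{\mathbb A}^{\pm}$ according to the sign of $s$). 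Hence once $M\cdot M\subseteq M$ is known, $M=U_{\mathbb A}$, which is exactly the assertion. To obtain $M\cdot M\subseteq M$ it suffices to establish the three ``straightening'' inclusions (i) $U_{\mathbb A}^{+}U_{\mathbb A}^{-}\subseteq U_{\mathbb A}^{-}U_{\mathbb A}^{0}U_{\mathbb A}^{+}$, (ii) $U_{\mathbb A}^{0}U_{\mathbb A}^{-}\subseteq U_{\mathbb A}^{-}U_{\mathbb A}^{0}$, (iii) $U_{\mathbb A}^{+}U_{\mathbb A}^{0}\subseteq U_{\mathbb A}^{0}U_{\mathbb A}^{+}$: given these, in the product $(U_{\mathbb A}^-U_{\mathbb A}^0U_{\mathbb A}^+)(U_{\mathbb A}^-U_{\mathbb A}^0U_{\mathbb A}^+)$ one first rewrites the inner $U_{\mathbb A}^+U_{\mathbb A}^-$ by (i), then pushes the resulting two $U_{\mathbb A}^0$ factors outward past the adjacent $U_{\mathbb A}^\mp$ factors by (ii) and (iii), and collapses repeated factors of the same type to land in $M$.

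Inclusions (ii) and (iii) are the routine half. For (ii) I would induct on the length of a monomial in the $U_{\mathbb A}^{-}$-generators: by \propref{Arelns} every generator of $U_{\mathbb A}^{0}$ (namely $K_i^{\pm1}$, $\gamma^{\pm1/2}$, $D^{\pm1}$, $\Lnum{K_i}{s}{n}$, $\Lnum{D}{s}{n}$, $\Lnum{\gamma}{s}{1}_i$, $\Lnum{\gamma\psi_i}{k,-k}{1}$) commutes past each generator $x_{jr}^{-}$, $h_{jl}$ of $U_{\mathbb A}^{-}$ at the cost of a shift of integer parameters (and a scalar), the shifted element again lying in $U_{\mathbb A}^{0}$; centrality of $\gamma^{\pm1/2}$ and of $\Lnum{\gamma}{s}{1}_i$ is used for those generators. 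The induction then carries the whole $U_{\mathbb A}^{0}$-part across one $U_{\mathbb A}^{-}$-generator at a time. Inclusion (iii) is symmetric.

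The substantive point is (i), the analogue of the classical $EF$-straightening. I would fix a monomial $w^{-}$ in the $U_{\mathbb A}^{-}$-generators and prove $g\,w^{-}\in M$ for every $U_{\mathbb A}^{+}$-generator $g$, by induction on the length of $w^{-}$. Writing $w^{-}=y\,v^{-}$ with $y$ a single $U_{\mathbb A}^{-}$-generator, one moves $g$ past $y$ using the appropriate relation from \propref{Arelns}: $x_{ik}^{+}x_{jl}^{-}=x_{jl}^{-}x_{ik}^{+}+\delta_{ij}\Lnum{\gamma\psi_i}{k,l}{1}$, together with the $h$--$x$ and $h$--$h$ commutators. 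In each case the principal term has the form (a $U_{\mathbb A}^{-}$-generator)$\cdot(g'v^{-})$ with $g'$ again a $U_{\mathbb A}^{+}$-generator and $v^{-}$ strictly shorter, hence in $M$ by induction followed by left multiplication by an element of $U_{\mathbb A}^{-}$; the correction term is either another $U_{\mathbb A}^{+}$-generator times $v^{-}$, again covered by the length induction, or an element of $U_{\mathbb A}^{-}$ or $U_{\mathbb A}^{0}$ times $v^{-}$, which lies in $M$ by (ii). The \emph{main obstacle} is the case where $g$ itself is a generator $\Lnum{\gamma\psi_i}{k,l}{1}$ with $k+l>0$: since $\phi_{i,k+l}=0$ for $k+l>0$, this is a scalar multiple of $\gamma^{(k-l)/2}\psi_{i,k+l}$, and one invokes \lemref{psix2} to commute $\psi_{i,k+l}$ past $x_{jm}^{-}$; the commutator there is a sum of terms $\psi_{i,r'}x^{-}_{j,*}$ with strictly smaller index $r'<k+l$ plus a pure $x^{-}_{j,*}$ term. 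The argument therefore closes by a nested induction on the pair (index of the $\psi$, length of $w^{-}$) ordered lexicographically, the base case $\psi_{i,0}=K_i\in U_{\mathbb A}^{0}$ being subsumed in (ii). Assembling (i)--(iii) gives $M\cdot M\subseteq M$, hence $M=U_{\mathbb A}$, and rereading this equality on monomials yields the stated triangular decomposition.
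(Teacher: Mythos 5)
Your proposal is correct and takes essentially the same route as the paper: a straightening argument that moves the positive and zero-part generators past the negative-part generators using the relations of \propref{Arelns} together with \lemref{psix2}, with an induction on the index of the $\psi$-type Lusztig elements (and on monomial length). Your packaging via the span $M$ and the three inclusions is simply a more explicitly organized version of the paper's own proof, not a different method.
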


\begin{proof} We first consider an element of the form $wx_{jl}^-$ where $w$ is a monomial in $U^0_{\mathbb A}$ or $U^+_{\mathbb A}$. We need to move the $w$ past $x^-_{jl}$.  
If $w=x_{il}^+$, then
\begin{align*}
x_{ik}^+x_{jl}^-=x_{jl}^-x_{ik}^++\delta_{ij}
    \Lnum{\gamma\psi_i}{k,l}{1},
\end{align*}
which now has summands that are in $U_{\mathbb A}^-U_{\mathbb A}^0U_{\mathbb A}^+$.
Similarly if $w$ is any of the following elements
$$  
h_{is}, s>0,\enspace K_i^{\pm 1}, \enspace \gamma^{\pm 1/2},D^{\pm 1} ,  \Lnum{K_i}{s}{n},\Lnum{D}{s}{n},\Lnum{\gamma}{s}{1}_i,
$$
 then the Proposition above shows that $wx_{jl}^-\in U_{\mathbb A}^-U_{\mathbb A}^0U_{\mathbb A}^+$.  If $w= \Lnum{\gamma\psi_i}{k,l}{1}$, then \lemref{psix2} tells us (using induction on  $k$), that $wx_{jl}^-\in U_{\mathbb A}^-U_{\mathbb A}^0U_{\mathbb A}^+$.

If we consider elements of the form $wz$ where $z=h_{ik}$, $k<0$, then if $w$ is any of the elements 
$$  
x_{jl}^+,h_{js}, s>0,\enspace K_i^{\pm 1}, \enspace \gamma^{\pm 1/2},D^{\pm 1} ,  \Lnum{K_i}{s}{n},\Lnum{D}{s}{n},\Lnum{\gamma}{s}{1}_i,  \Lnum{\gamma\psi_i}{k,l}{1}
$$
then the Proposition above shows that $wh_{ik}\in U_{\mathbb A}^-U_{\mathbb A}^0U_{\mathbb A}^+$.

If we consider elements of the form $wz$ where 
$$
z= \Lnum{\gamma\psi_i}{k,l}{1}=\frac{-\gamma^{\frac{l-k}{2}}\phi_{i,k+l}}{q_i-q^{-1}_i}
$$ 
$k+l<0$, and if $w$ is any of the elements 
$$  
x_{jl}^+,h_{js}, s>0,\enspace K_i^{\pm 1}, \enspace \gamma^{\pm 1/2},D^{\pm 1} ,  \Lnum{K_i}{s}{n},\Lnum{D}{s}{n},\Lnum{\gamma}{s}{1},  \Lnum{\gamma\psi_i}{k,l}{1}
$$
then \lemref{psix2} and \propref{Arelns} above shows that $w \Lnum{\gamma\psi_i}{k,l}{1}\in U_{\mathbb A}^-U_{\mathbb A}^0U_{\mathbb A}^+$.
\end{proof}

Let $Aut(\Gamma)$ be the set of automorphisms of the afffine Dynkin diagram $\Gamma$.  Recall $I_0=\{1,...,N\}$, and let $\pi:\mathbb Z\ni r\mapsto\pi_r\in I$, $N_1,...,N_n\in\mathbb Z_{\geq 0}$, $\tau_1,...,\tau_n\in Aut(\Gamma)$ be such that: 

\begin{enumerate}[i).]
\item $N_i=\sum_{j=1}^i l(\omega_j)$ $\forall i\in I_0$ (where $\langle \omega_i,\alpha_j\rangle=\delta_{ij} $ for all $i,j\in I_0$); 

\item  $s_{\pi_1}\cdots s_{\pi_{_{N_i}}}\tau_i=\sum_{j=1}^i \omega_j$ $\forall i\in I_0$; 

\item $\pi_{r+N_n}=\tau_n(\pi_r)$ $\forall r\in\mathbb  Z$;
\end{enumerate}
\color{black}
(these conditions imply that for all $r<r^{\prime}\in\mathbb Z$ $s_{\pi_r}s_{\pi_{r+1}}\cdots s_{\pi_{r^{\prime}-1}}s_{\pi_{r^{\prime}}}$ is a reduced expression, see \cite{MR0185016} and \cite{K})

Then 
$\pi$ induces a map
$$
\mathbb Z\ni r\mapsto w_r\in W\ \ {\text{
defined\ by}}\ \ w_r=\begin{cases}s_{\pi_0}\cdots  s_{\pi_{r+1}}&{\text{if}}\ r<0,
 \\   1
&{\text{if}}\ r=0,1,\\  s_{\pi_1}\cdots  s_{\pi_{r-1}}&{\text{if}}\ r> 1,
\end{cases}
$$
\color{black}
a bijection
$$\mathbb Z\ni r\mapsto\beta_r=w_r(\alpha_{\pi_r})\in\Phi^{{\text{
re}}}_+,$$
and of course a bijection $\{\pm\}\times\mathbb Z\leftrightarrow\Phi^{{\text{
re}}}$.

For all $\alpha=\beta_r\in\Phi^{{\text{
re}}}_+$ as in \eqnref{realrootvector} the root vectors $E_{\alpha}$ can be written as: 
$$E_{\beta_r}=\begin{cases}
T_{w_r^{-1}}^{-1}(E_{\pi_r})&{\text{
if}}\ r< 0, \\
E_{\pi(0)}  & \text{if } r=0,\\
E_{\pi(1)}  & \text{if }r = 1, \\
 T_{w_r}(E_{\pi_r})&{\text{
if}}\ r> 1\\\end{cases}$$ 
and we define
$$F_{\alpha}=\Omega(E_{\alpha}).$$

For $r\in\mathbb Z$, we define
$$\beta_r^{\pm}=\begin{cases}
\pm\beta_r&{\text{
if}}\ r\leq 0\cr\mp\beta_r&{\text{
if}}\ r> 0;\end{cases}$$
then of course 
$$\{\beta_r^+|r\in\mathbb Z\}=\{m\delta+\alpha\in\Phi|m\in\mathbb Z,\alpha\in \Delta_{0,+}\},$$
$$\{\beta_r^-|r\in\mathbb Z\}=\{m\delta-\alpha\in\Phi|m\in\mathbb Z,\alpha\in \Delta_{0,+}\}.$$

The root vectors do depend on $\pi$ (for example if $a_{ij}=a_{ji}=-1$ we have $T_i(E_j)\neq T_j(E_i)$). What is independent of $\pi$ are the root vectors relative to the roots $ m\delta\pm\alpha_i$:
$$
E_{ m\delta+\alpha_i}=T_{\omega_i}^{-m}(E_i),\ \ E_{ m\delta-\alpha_i}=T_{\omega_i}^{m}T_i^{-1}(E_i).$$

\begin{lem}\label{trealrootvector}
Let $\omega=\omega_1+\cdots+\omega_n$ and let
$m\in\mathbb Z_{\geq 0},\alpha\in \Delta_{0,+}, h\in\mathbb Z$. Then
$m\delta+\alpha\in\Delta_+$ implies 
$$
T_{\omega}^h(E_{m\delta+\alpha})=\begin{cases}
E_{(m-h\langle \omega,\alpha\rangle)\delta+\alpha}&{\text{if}}\ m-h\langle \omega,\alpha\rangle\geq 0\cr
-F_{(h\langle \omega,\alpha\rangle-m)\delta-\alpha}K_{(h\langle \omega,\alpha\rangle-m)\delta-\alpha}&{\text{if}}\ m-h\langle \omega,\alpha\rangle< 0;\end{cases}$$
and
$m\delta-\alpha\in\Delta_+$, implies
$$
T_{\omega}^h(E_{m\delta-\alpha})=\begin{cases}
E_{(m+h\langle \omega,\alpha\rangle)\delta-\alpha}&{\text{if}}\ m+h\langle \omega,\alpha\rangle>0\cr
-K_{-(m+h\langle \omega,\alpha\rangle)\delta+\alpha}^{-1}F_{-(m+h\langle \omega,\alpha\rangle)\delta+\alpha}&{\text{if}}\ m+h\langle \omega,\alpha\rangle\leq 0.\end{cases}$$
\end{lem}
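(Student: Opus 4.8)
The plan is to read the identity off the enumeration $r\mapsto\beta_r$ of the positive real roots together with the periodicity conditions (i)--(iii) on $\pi$, which turn left multiplication by $\omega$ into a shift of $r$ by $N_n=l(\omega)$. Three preliminary remarks. Since $\langle\omega,\alpha\rangle=\sum_j\langle\omega_j,\alpha\rangle=\mathrm{ht}(\alpha)>0$, the transformation $\omega$ lowers the coefficient of $\delta$ on a root $m\delta+\alpha$ by $\mathrm{ht}(\alpha)$ and raises it on $m\delta-\alpha$ (and $\omega^{-1}$ does the opposite), so which of the two cases of the statement occurs is controlled by the sign of $m\mp h\langle\omega,\alpha\rangle$, which is monotone in $h$. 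Next, $\Omega T_\omega=T_\omega\Omega$, because $\Omega$ commutes with each $T_j$ and with $T_{\tau_n}$. Finally, since $\omega=s_{\pi_1}\cdots s_{\pi_{N_n}}\tau_n$ is reduced, $T_\omega=T_{\pi_1}\cdots T_{\pi_{N_n}}\tau_n=\tau_n\,T_{\pi_{1-N_n}}\cdots T_{\pi_0}$ (writing $\tau_n$ for $T_{\tau_n}$), and condition (iii) gives $\tau_n T_{\pi_j}^{\pm1}=T_{\pi_{j+N_n}}^{\pm1}\tau_n$.

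The heart of the argument is a single-step shift: for every $r\in\mathbb Z$,
$$
T_\omega(E_{\beta_r})=\begin{cases}E_{\beta_{r+N_n}}&\text{if }\omega(\beta_r)\in\Delta_+,\\ -F_{\beta_{r+N_n}}K_{\beta_{r+N_n}}&\text{if }\omega(\beta_r)\in\Delta_-,\end{cases}
$$
where $\beta_{r+N_n}$ is whichever of $\pm\omega(\beta_r)$ lies in $\Delta_+$; and dually (apply $\Omega$, or run the argument with $\omega^{-1}$) $T_\omega^{-1}(E_{\beta_r})$ equals $E_{\beta_{r-N_n}}$ when $\omega^{-1}(\beta_r)\in\Delta_+$ and $-K_{\beta_{r-N_n}}^{-1}F_{\beta_{r-N_n}}$ otherwise. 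I would prove this by telescoping: substitute the expression \eqref{realrootvector} for $E_{\beta_r}$ and one of the factorizations of $T_\omega^{\pm1}$ above, move $\tau_n^{\pm1}$ through via $\tau_n T_{\pi_j}^{\pm1}=T_{\pi_{j+N_n}}^{\pm1}\tau_n$, and cancel the resulting $T_{\pi_k}^{\pm1}$ in adjacent pairs. When $r$ lies outside the flip range --- one checks this range is $\{1-N_n,\dots,0\}$ for $\omega$ and $\{1,\dots,N_n\}$ for $\omega^{-1}$ --- the cancellation exhausts one factor and what remains is exactly \eqref{realrootvector} for $E_{\beta_{r\pm N_n}}$; inside the flip range one braid generator $T_{\pi_j}$ (resp.\ $T_{\pi_j}^{-1}$) survives, and $T_{\pi_j}(E_{\pi_j})=-F_{\pi_j}K_{\pi_j}$ (resp.\ $T_{\pi_j}^{-1}(E_{\pi_j})=-K_{\pi_j}^{-1}F_{\pi_j}$) conjugated by the remaining $T_{\pi_1}\cdots T_{\pi_{j-1}}$ produces the stated correction, using $T_w(F_{\pi_j})=\Omega(T_w(E_{\pi_j}))$ and $T_w(K_{\pi_j})=K_{w(\alpha_{\pi_j})}$. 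Comparing weights of both sides identifies $\beta_{r\pm N_n}$ with the positive root among $\pm\omega^{\pm1}(\beta_r)$.

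Granting the one-step shift, the lemma follows by iteration. Write $m\delta+\alpha=\beta_r$ with $r\le 0$. For $h\le 0$, and for $h>0$ as long as $m-h\langle\omega,\alpha\rangle\ge 0$, each application of $T_\omega^{\pm1}$ is non-flip, so $T_\omega^h(E_{m\delta+\alpha})=E_{\beta_{r+hN_n}}=E_{(m-h\langle\omega,\alpha\rangle)\delta+\alpha}$. When $h$ is large enough that $m-h\langle\omega,\alpha\rangle<0$, put $h_0=\lfloor m/\langle\omega,\alpha\rangle\rfloor$: the first $h_0$ steps reach $E_{p\delta+\alpha}$ with $0\le p=m-h_0\langle\omega,\alpha\rangle<\langle\omega,\alpha\rangle$, step $h_0+1$ is a flip and produces $-F_{(\langle\omega,\alpha\rangle-p)\delta-\alpha}K_{(\langle\omega,\alpha\rangle-p)\delta-\alpha}$, and every further $T_\omega$ --- now acting on $-F_{k\delta-\alpha}K_{k\delta-\alpha}$ with $k\delta-\alpha$ outside the flip range of $\omega$ --- sends $k\mapsto k+\langle\omega,\alpha\rangle$ by $\Omega T_\omega=T_\omega\Omega$ and $T_\omega(K_\mu)=K_{\omega(\mu)}$; since $(\langle\omega,\alpha\rangle-p)+(h-h_0-1)\langle\omega,\alpha\rangle=h\langle\omega,\alpha\rangle-m$, this is the first assertion. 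The second assertion is the mirror statement, with $m\delta-\alpha=\beta_r$, $r\ge 1$, the roles of $\omega$ and $\omega^{-1}$ interchanged, and the flip correction of the form $-K_\mu^{-1}F_\mu$.

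The hard part is the single-step shift, and within it the bookkeeping: locating the flip range precisely, and checking that the leftover braid generator conjugates to give exactly $-F_\mu K_\mu$ (resp.\ $-K_\mu^{-1}F_\mu$), with the $K$-factor on the correct side and no spurious power of $q$. Once that is settled the remainder is a routine iteration, resting on the rank-one identities $T_i(E_i)=-F_iK_i$, $T_i^{-1}(E_i)=-K_i^{-1}F_i$ and on $\langle\omega,\alpha\rangle=\mathrm{ht}(\alpha)$.
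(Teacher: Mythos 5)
Your proposal is correct and follows essentially the same route as the paper: the paper also reduces everything to the one-step shift $T_{\omega}(E_{\beta_r})=E_{\beta_{r+N_n}}$ for $r\geq 1$ or $r\leq -N_n$ and $-F_{\beta_{r+N_n}}K_{\beta_{r+N_n}}$ on the flip range $-N_n<r\leq 0$, obtained from the factorization $\omega=\tau_n s_{\pi_{1-N_n}}\cdots s_{\pi_0}$ (phrased there as the Weyl-group identities $\omega w_r=w_{r+N_n}\tau_n$ or $w_{r+N_n}\tau_n s_{\pi_r}$ rather than your explicit telescoping of braid operators), and then iterates in $h$ using $\Omega T_{\omega}=T_{\omega}\Omega$ and $T_{\omega}(K_\mu)=K_{\omega(\mu)}$ exactly as you describe.
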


\begin{proof}
The function $\pi$ is defined to have the following property:\
\begin{itemize}
\item If 
$m\delta+\alpha\in \Delta_+$, then there exists $r\leq 0$ such that $m\delta+\alpha=\beta_r=s_{\pi_0}s_{\pi_{-1}}\cdots s_{\pi_{r+1}}(\alpha_{\pi_r})$ (if $r= 0$, then $\beta_0=\alpha_{\pi_0}$);  
\item If 
$m\delta-\alpha\in \Delta_+$, then there exists $r\geq 1$ such that $m\delta-\alpha=\beta_r=s_{\pi_{1}}s_{\pi_{2}}\cdots s_{\pi_{r-1}}(\alpha_{\pi_r})$ (if $r= 1$, then $\beta_1=\alpha_{\pi_1}$) .
\end{itemize}
Recall $\omega(m\delta\pm \alpha)=m\delta\pm (\alpha-\langle \omega,\alpha\rangle\delta)$.  
By the definition of $\pi$ we have $\omega=s_{\pi_1}\cdots s_{\pi_{N_n}}\tau_n=\tau_ns_{\tau^{-1}_n(\pi_1)}\cdots s_{\tau^{-1}_n(\pi_{N_n})}=\tau_ns_{\pi_{1-N_n}}\cdots s_{\pi_0}$.  It follows that 
\begin{align*}
\omega  w_r=
\begin{cases}w_{r+N_n}\tau_n &\quad\text{if $r\geq 1$ or $r\leq -N_n$} \\
w_{r+N_n}s_{\pi_{r+N_n}}\tau_n=w_{r+N_n}\tau_n s_{\pi_r} &\quad\text{if $-N_n<r\leq 0$}, \\\end{cases}
\end{align*}
and
\begin{align*}
\omega (\beta_r)=
\begin{cases}\beta_{r+N_n} &\quad\text{if $r\geq 1$ or $r\leq -N_n$},\\
-\beta_{r+N_n} &\quad\text{if $-N_n<r\leq 0$}, \\\end{cases}
\end{align*}
This means $\omega(\beta_r^\pm)=\beta_{r+N_n}^\pm$.  Moreover
\begin{itemize}
\item If $r\geq 1$, then $T_\omega T_{w_r}=T_{w_r+N_n}\tau_n$.
\item If $r\leq 0$, then $$T_\omega T_{w_r^{-1}}^{-1}=\begin{cases}  T_{w^{-1}_{r+N_n}}^{-1}\tau_n &\quad \text{ if }r\leq-N_n, \\
T_{w_r+N_n}T_{\pi_{r+N_n}}\tau_n &\quad \text{ if } -N_n<r\leq 0.
 \end{cases}$$
\end{itemize}
Thus
\begin{align*}
&T_\omega(E_{\beta_r}) \\
&=\begin{cases}
E_{\beta_{r+N_n}} &\quad \text{if $r\geq 1$ or $r\leq -N_n$}, \\
T_{w_{r+N_n}}T_{\pi_{r+N_n}}(E_{\pi_{r+N_n}}) 
=-F_{\beta_{r+N_n}}K_{\beta_{r+N_n}}&\quad \text{if $ -N_n<r\leq 0$}.
\end{cases}
\end{align*}
Now assuming the result above for $h> 0$,
\begin{align*}
T_{\omega}^{h+1}(E_{m\delta+\alpha})&=\begin{cases}
 E_{(m-(h+1)\langle \omega,\alpha\rangle)\delta+\alpha}&{\text{if}}\ m-h\langle \omega,\alpha\rangle\geq 0 , \\ 
-T_{\omega}(F_{(h\langle \omega,\alpha\rangle-m)\delta-\alpha})T_{\omega}(K_{(h\langle \omega,\alpha\rangle-m)\delta-\alpha})&{\text{if}}\ m-h\langle \omega,\alpha\rangle< 0;\end{cases}  \\
&=\begin{cases}
 E_{(m-(h+1)\langle \omega,\alpha\rangle)\delta+\alpha}&{\text{if}}\ m-h\langle \omega,\alpha\rangle\geq 0 , \\ 
-\Omega T_{\omega}(E_{(h\langle \omega,\alpha\rangle-m)\delta-\alpha}) K_{((h+1)\langle \omega,\alpha\rangle-m)\delta-\alpha}&{\text{if}}\ m-h\langle \omega,\alpha\rangle< 0;\end{cases}  \\ 
%%%%%%%%%%%%%%%%%%%%%%%%%%%%%%%%
%&=\begin{cases}
% E_{(m-(h+1)\langle \omega,\alpha\rangle)\delta+\alpha}&{\text{if}}\ m-h\langle \omega,\alpha\rangle\geq 0 , \\ 
%-\Omega (E_{((h+1)\langle \omega,\alpha\rangle-m)\delta-\alpha}) K_{((h+1)\langle \omega,\alpha\rangle-m)\delta-\alpha}&{\text{if}}\ (h+1)\langle \omega,\alpha\rangle-m >  0, \\
%\Omega  (K^{-1}_{-((h+1)-\langle \omega,\alpha\rangle)\delta+\alpha}F_{-((h+1)\langle \omega,\alpha\rangle-m)\delta+\alpha})) K_{((h+1)\langle \omega,\alpha\rangle-m)\delta-\alpha}&{\text{if}}\ ( h+1)\langle \omega,\alpha\rangle-m \leq   0;  \\
%\end{cases}  \\
&=\begin{cases}
 E_{(m-(h+1)\langle \omega,\alpha\rangle)\delta+\alpha}&{\text{if}}\ m-h\langle \omega,\alpha\rangle\geq 0 , \\ 
-F_{((h+1)\langle \omega,\alpha\rangle-m)\delta-\alpha} K_{((h+1)\langle \omega,\alpha\rangle-m)\delta-\alpha}&{\text{if}}\ (h+1)\langle \omega,\alpha\rangle-m >  0,  \\
 E_{-((h+1)\langle \omega,\alpha\rangle-m)\delta+\alpha} &{\text{if}}\ ( h+1)\langle \omega,\alpha\rangle-m \leq   0,  \\
\end{cases}  \\
&=\begin{cases}
 E_{(m-(h+1)\langle \omega,\alpha\rangle)\delta+\alpha}&{\text{if}}\ m-(h+1)\langle \omega,\alpha\rangle\geq 0 , \\ 
-F_{((h+1)\langle \omega,\alpha\rangle-m)\delta-\alpha} K_{((h+1)\langle \omega,\alpha\rangle-m)\delta-\alpha}&{\text{if}}\  m- (h+1)\langle \omega,\alpha\rangle<0.
\end{cases}  \\
\end{align*}

\end{proof}

Let $m\in\mathbb Z$, $\alpha\in \Delta_{0,+}$ be such that $m\delta\pm\alpha\in\Delta$; consider the following modified root vectors:

$$X_{m\delta+\alpha}=\begin{cases}
 E_{m\delta+\alpha}&{\text{
if}}\ m\geq 0,  \\
-F_{-m\delta-\alpha}K_{-m\delta-\alpha}&{\text{
if}}\ m< 0,\end{cases}
$$
$$
X_{m\delta-\alpha}=\begin{cases}
 -K_{m\delta-\alpha}^{-1}E_{m\delta-\alpha}&{\text{
if}}\ m> 0,\cr
F_{-m\delta+\alpha}&{\text{
if}}\ m\leq 0,\end{cases}$$
($\Omega(X_{m\delta\pm\alpha})=X_{-m\delta\mp\alpha}$).

Equivalently
$$X_{\beta_r^+}=\begin{cases}
 E_{\beta_r}&{\text{
if}}\ r\leq 0\cr-F_{\beta_r}K_{\beta_r}&{\text{
if}}\ r\geq 1,\end{cases}\ \ \ X_{\beta_r^-}=\begin{cases}
 F_{\beta_r}&{\text{
if}}\ r\leq 0\cr-K_{\beta_r}^{-1}E_{\beta_r}&{\text{
if}}\ r\geq 1.
\end{cases}$$

It follows from the proof of \lemref{trealrootvector} that: 
$$\forall r\in\mathbb Z\ \ 
T_{\omega}(X_{\beta_r^{\pm}})=X_{\beta_{r+N_n}^{\pm}}.
$$

\begin{cor}
If ${\mathcal X}$ is a finite subset of $\{X_{\beta}|\beta\in\Delta^{{\text{
re}}}\}$, then  there exists $h\in\mathbb Z$ such that $T_{\omega}^h({\mathcal X})\subseteq\{X_{\beta_r^{\pm}}|r\leq 0\}$.
\end{cor}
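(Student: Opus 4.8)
The plan is to leverage the shift identity $T_{\omega}(X_{\beta_r^{\pm}})=X_{\beta_{r+N_n}^{\pm}}$ recorded just above the statement (a consequence of the proof of \lemref{trealrootvector}), together with the finiteness of $\mathcal X$. First I would put the elements of $\mathcal X$ in standardized form: since $\{\pm\}\times\mathbb Z\leftrightarrow\Delta^{\text{re}}$ is a bijection, every $X_{\beta}$ with $\beta\in\Delta^{\text{re}}$ equals $X_{\beta_r^{\epsilon}}$ for a unique pair $(\epsilon,r)\in\{\pm\}\times\mathbb Z$, so I may write $\mathcal X=\{X_{\beta_{r_1}^{\epsilon_1}},\dots,X_{\beta_{r_k}^{\epsilon_k}}\}$.

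Next I would iterate the shift identity. Combining $T_{\omega}(X_{\beta_r^{\pm}})=X_{\beta_{r+N_n}^{\pm}}$ with its inverse $T_{\omega}^{-1}(X_{\beta_r^{\pm}})=X_{\beta_{r-N_n}^{\pm}}$, a one-line induction on $|h|$ yields $T_{\omega}^{h}(X_{\beta_r^{\pm}})=X_{\beta_{r+hN_n}^{\pm}}$ for all $h\in\mathbb Z$. The key quantitative point is that $N_n>0$: by condition (i) above, $N_n=\sum_{j=1}^{n}l(\omega_j)$, and each fundamental coweight $\omega_j$ is a nonzero element of $\check P_0\subset\tilde W$, hence has length $\geq 1$, so $N_n\geq n\geq 1$.

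Finally I would invoke finiteness. Assuming $\mathcal X\neq\emptyset$ (the empty case being vacuous), set $R=\max_{1\leq i\leq k}r_i$ and pick any integer $h$ with $hN_n\leq -R$, which is possible precisely because $N_n>0$. Then $r_i+hN_n\leq r_i-R\leq 0$ for every $i$, so $T_{\omega}^{h}(X_{\beta_{r_i}^{\epsilon_i}})=X_{\beta_{r_i+hN_n}^{\epsilon_i}}\in\{X_{\beta_r^{\pm}}\mid r\leq 0\}$, giving $T_{\omega}^{h}(\mathcal X)\subseteq\{X_{\beta_r^{\pm}}\mid r\leq 0\}$. I do not expect any genuine obstacle here: the corollary is essentially bookkeeping on top of the shift identity, and the only point requiring a moment's care is the positivity of $N_n$, which is exactly what guarantees that a sufficiently negative power of $T_{\omega}$ drives all the indices down into the range $r\leq 0$ rather than merely permuting them.
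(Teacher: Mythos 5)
Your proof is correct and follows exactly the route the paper intends: the corollary is an immediate consequence of the shift identity $T_{\omega}(X_{\beta_r^{\pm}})=X_{\beta_{r+N_n}^{\pm}}$ stated just before it, iterated and combined with finiteness of $\mathcal X$ (the paper gives no further argument). Your explicit check that $N_n>0$, which is what lets a sufficiently negative power of $T_\omega$ push all indices into the range $r\leq 0$, is a welcome detail the paper leaves tacit.
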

\begin{rem}
The imaginary root vectors are fixed points for the action of the $T_{\omega_i}$, and in particular of $T_{\omega}$.
\end{rem}
\begin{thm}\label{mainresult}
Given $\um:\mathbb Z\ni r\mapsto m_r\in\mathbb Z_{\geq 0}$ such that $\#\{r\in\mathbb Z|m_r\neq 0\}<\infty$ define
$$X^-(\um)=\prod_{r\in\mathbb Z}X_{\beta_r^-}^{m_r},\ \ X^+(\um)=\prod_{r\in\mathbb Z}X_{\beta_r^+}^{m_r}$$
where one chooses a fixed ordering for the products.

Given $\ul: \Delta_+(\text{\rm im})\to\mathbb Z_{\geq 0}$ such that $\#\{(r\delta,i)\in \Delta_+(\text{\rm im})|l_{(r\delta,i)}\neq 0\}<\infty$ define 
$$
E^{{\text{
im}}}(\ul)=\prod_{(r\delta,i)\in  \Delta_+(\text{\rm im})}E_{(r\delta,i)}^{l_{(r\delta,i)}},\ \ 
F^{{\text{
im}}}(\ul)=\Omega(E^{{\text{
im}}}(\ul)),
$$
where $E_{(r\delta,i)}=E^{(i)}_{r\delta}$.
Then the set 
\begin{equation}\label{imaginarypbw}
\{X^-(\um)F^{{\text{
im}}}(\ul)K_{\alpha}D^r\gamma^{s/2}E^{{\text{im}}}(\ul^{\prime})X^+(\um^{\prime})\},\quad r,s\in\mathbb Z,\quad \alpha\in Q_0
\end{equation}
is a basis of $U_q(\hat{\mathfrak g})$. 
\end{thm}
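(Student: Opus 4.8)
The plan is to prove the statement in two halves --- that the set~\eqref{imaginarypbw} spans $U_q(\widehat{\mathfrak g})$ over $\mathbb C(q^{1/2})$, and that it is linearly independent --- the common engine being the braid transport of \lemref{trealrootvector} and the corollary following it. That corollary moves any finite family of the modified root vectors $X_{\beta_r^{\pm}}$, by a suitable power $T_\omega^h$, into the range $\{\,X_{\beta_r^{\pm}}\mid r\le 0\,\}$, where $X_{\beta_r^{+}}=E_{\beta_r}$ is an honest positive real root vector and $X_{\beta_r^{-}}=F_{\beta_r}=\Omega(E_{\beta_r})$ an honest negative one, while $T_\omega$ fixes the imaginary root vectors (the remark above) and preserves $U_q^0$. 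Hence any identity we need among the $X_{\beta_r^{\pm}}$, the $E_{k\delta}^{(i)}$ and $U_q^0$ may be checked after applying a suitable $T_\omega^h$, where it becomes an identity inside $U_q^-\,U_q^0\,U_q^+$ that is controlled by the standard PBW decomposition recalled above, by the convexity formula~\eqref{convexity}, and by the Drinfeld relations (\propref{Arelns}, \lemref{psix2}, \eqref{hs} and~\eqref{axcommutator}); applying $T_\omega^{-h}$ then returns an identity of the shape occurring in~\eqref{imaginarypbw}.

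For spanning, let $\mathcal M$ denote the $\mathbb C(q^{1/2})$-span of~\eqref{imaginarypbw}. Since $1\in\mathcal M$ and since $\{X_{\beta_r^{\pm}}\}\cup\{E_{k\delta}^{(i)}\}\cup U_q^0$ generates $U_q(\widehat{\mathfrak g})$ --- it contains $U_q^0$ and all Chevalley generators, for instance $E_i=X_{\alpha_i}$, $F_i=X_{-\alpha_i}$ for $i\in I_0$, and $E_0=-K_{\delta-\theta}X_{\delta-\theta}$ with $\delta-\theta$ lying in the set $\{\beta_r^-\}$ --- it is enough to show that left multiplication of a monomial~\eqref{imaginarypbw} by any element of $U_q^0$, by any $E_{k\delta}^{(i)}$, or by any $X_{\beta_r^{\pm}}$ stays in $\mathcal M$. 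Stability under $U_q^0$ is immediate: each factor of~\eqref{imaginarypbw} is an eigenvector for conjugation by $K_\alpha$, $D$ and $\gamma^{1/2}$ up to a power of $q$, so such an element is absorbed, with a scalar, into the central slot $K_\alpha D^r\gamma^{s/2}$. Stability under an $E_{k\delta}^{(i)}$ uses that the $h_{jl}$ with $l$ of a fixed sign commute, together with~\eqref{hs} and~\eqref{axcommutator} to carry $h_{ik}$ across the $X^{\pm}$-blocks and the opposite imaginary block, the terms produced en route being again of admissible shape. The substantial case is stability under $X_{\beta_r^{\pm}}$: an $X_{\beta_r^{+}}$ must be transported rightward to join $X^{+}(\um')$, and symmetrically an $X_{\beta_r^{-}}$ leftward. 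Re-sorting within the $X^{+}$-block (resp.\ the $X^{-}$-block) stays inside that block, since every $\beta_r^{+}$ has finite part in $\Delta_{0,+}$ so that no imaginary vectors can be created, and this is a convexity computation transported into $U_q^{+}$ (resp.\ $U_q^{-}$); crossing an imaginary block or the central slot is handled as above. The genuinely new point is the product $X_{\beta_r^{+}}X_{\beta_s^{-}}$: here I would transport both vectors (and every other factor present) into the honest Borel subalgebras by $T_\omega^h$, where by the standard triangular decomposition the relevant $q$-commutator of a positive with a negative real root vector is a combination of ordered monomials in $U_q^-U_q^0U_q^+$ --- in the Drinfeld-generator case given explicitly by~\eqref{xcommutator}, whose right-hand side is $\delta_{ij}$ times a monomial of $U_q^0$ times a polynomial in the $h_{i,k+l}$, that is, a product of a Cartan element and an imaginary monomial --- re-sort that using~\eqref{convexity}, and transport back; an induction on the number of factors and on $|k|$ closes the straightening.

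For linear independence I would compare with Beck's convex PBW basis $\{\,F^{\mathrm{std}}(\up)\,K_\alpha D^r\gamma^{s/2}\,E^{\mathrm{std}}(\underline n)\,\}$ of $U_q(\widehat{\mathfrak g})$ recalled above. There is a weight-preserving bijection between the indexing data of~\eqref{imaginarypbw} and of this basis: the $\beta_r^{+}$ ($r\in\mathbb Z$) run once through $\{\,m\delta+\alpha\,\}$ and the $\beta_r^{-}$ once through $\{\,m\delta-\alpha\,\}$, so jointly they run exactly once through $\Delta^{re}$ and the data $\um,\um'$ amount to a choice of exponent for each real root, while $\ul,\ul'$ assign exponents to the positive imaginary roots with the correct multiplicity $N$, and $K_\alpha D^r\gamma^{s/2}$ ranges over the monomial basis of $U_q^0$ in both cases. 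Now suppose a finite $\mathbb C(q^{1/2})$-linear relation holds among monomials~\eqref{imaginarypbw}; by the corollary to \lemref{trealrootvector} choose $h$ so that $T_\omega^h$ carries every real root vector occurring into $\{\,X_{\beta_r^{\pm}}\mid r\le 0\,\}$. Applying $T_\omega^h$ turns each such monomial into an ordered product of honest negative real root vectors, negative imaginary vectors, a monomial of $U_q^0$, positive imaginary vectors and honest positive real root vectors (no auxiliary $K$'s intervene, those built into $X_{\beta_r^{\pm}}$ appearing only for $r\ge 1$, which get shifted out of range); re-sorting the two outer blocks into Beck's convex order via~\eqref{convexity} exhibits this as a standard basis element plus a combination of strictly lower ones, and by the bijection above distinct monomials~\eqref{imaginarypbw} yield distinct leading standard basis elements. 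Taking the maximal leading element occurring in the relation forces its coefficient to vanish --- a contradiction. With the spanning statement this shows~\eqref{imaginarypbw} is a basis.

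The step I expect to be the real obstacle is this off-diagonal straightening $X_{\beta_r^{+}}X_{\beta_s^{-}}$: controlling, uniformly in $r$ and $s$, that its value lies in the span of the admissible monomials --- in particular that the imaginary root vectors it produces are placed only in the appropriate positive or negative imaginary slot and the Cartan contributions only in the central slot --- and arranging the induction so that it terminates. This is precisely where the transport $T_\omega^h$, Damiani's commutation formulas (\lemref{psix2} and its $\phi$-counterpart) and the convexity formula~\eqref{convexity} are indispensable.
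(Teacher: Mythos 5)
Your linear-independence half is essentially the paper's own argument: by the corollary to \lemref{trealrootvector} one applies a power of $T_{\omega}$ so that every real root vector occurring has index $r\le 0$, where $X_{\beta_r^{+}}=E_{\beta_r}$, $X_{\beta_r^{-}}=F_{\beta_r}$ and the imaginary vectors are fixed, so the transported monomials are (up to the re-sorting you describe) classical PBW monomials; that part is sound. The problem is the spanning half, and it sits exactly where you yourself say it does: the straightening of $X_{\beta_r^{+}}X_{\beta_s^{-}}$. The only cross-commutation formula available in the paper is \eqnref{xcommutator}, which covers the Drinfeld generators $x^{\pm}_{ik}$, i.e.\ real roots whose finite part is a \emph{simple} root; for general $\alpha\in\Delta_{0,+}$ the commutator of a positive with a negative real root vector is not of that shape and can involve products mixing real root vectors of both signs, so it is not established that the result lies in the span of admissible monomials. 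Your transport-and-return device does not repair this: after straightening inside $U_q^-U_q^0U_q^+$ and applying $T_{\omega}^{-h}$, the middle blocks of the classical monomials (the $E_{\beta_r},F_{\beta_r}$ with $r\ge 1$) come back as $K$-twisted $X$-vectors sitting in the wrong slots, so you face the same reordering problem you started with, and the proposed induction ``on the number of factors and on $|k|$'' has no identified well-founded measure. As written, the spanning statement is not proved.

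The paper circumvents the cross-straightening entirely, and its route is worth adopting. It first shows, using only the Levendorskii--So{\u\i}belman formula \eqnref{convexity} together with Damiani's formula for $[E^{(i)}_{r\delta},F_{k\delta-\alpha_j}]$ (and its image under $\Omega$), that $\{X^-(\um)F^{\text{im}}(\ul)K_{\gamma}\}$ and $\{K_{\gamma}E^{\text{im}}(\ul')X^+(\um')\}$ each span subalgebras; hence the span $U_q'$ of the proposed monomials is stable under left multiplication by the first family and right multiplication by the second. Then it observes that every classical PBW monomial has the form $X^-(\um)F^{\text{im}}(\ul)F(\up)K_{\gamma}E(\up')E^{\text{im}}(\ul')X^+(\um')$ with $F(\up),E(\up')$ built from the root vectors $F_{\beta_r},E_{\beta_r}$, $r\ge 1$, so spanning reduces to showing $F(\up)E(\up')\in U_q'$; this is done by induction on height, since commuting one $E_{\beta_k}$ past $F(\up)$ produces, by the ordinary triangular decomposition of \secref{jimbodrinfeld}, only Cartan factors and terms of strictly smaller height, after which the left/right stability absorbs the outer factors. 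If you want to keep your direct ``stability under left multiplication by each generator'' plan, you would in effect have to reprove this reduction or import a general formula for commutators of real root vectors of opposite finite sign; reorganizing your argument along the paper's lines is the cleaner fix.
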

\begin{proof}
Since we are dealing with a finite set, up to applying a suitable power of $T_{\omega}$ we can suppose $m_r=m_r^{\prime}=0$ $\forall r\geq 1$ (notice that $Q\ni\gamma\mapsto\omega(\gamma)=\gamma-(\omega|\gamma)\delta\in Q$ is a bijection).

Then the elements $X^-(\um)F^{{\text{
im}}}(\ul)K_{\gamma}E^{{\text{
im}}}(\ul^{\prime})X^+(\um^{\prime})$ are elements of the ``classical'' PBW-basis, and the claim is obvious. 

%{\it As for the proof that the vectors $$\{X^-(\um)F^{{\text{
%im}}}(\ul)K_{\gamma}E^{{\text{
%im}}}(\ul^{\prime})X^+(\um^{\prime})\}$$ span the quantum algebra, as I wrote you it is a simple consequence of the Drinfeld relations. One can also prove it with an even simpler and more direct inductive argument, without using the Drinfeld realization. This would avoid proving that the vectors $\{X^{\pm}(\um)\}$ span the subalgebra generated by $\{X_{r\delta\pm\alpha_i}\}$, which becomes a consequence of your theorem (in any case both proofs are easy).}
%\vskip .3 truecm
%{\bf REMARK 8.} 
The fact that the products \eqnref{imaginarypbw} span $U_q(\hat{\mathfrak g})$ is similar to the proof of \cite[Proposition 1, Theorem 1]{MR1662112} (see \thmref{irreducibilitythm} below).
The Levendorskii-Soibelman formula \eqnref{convexity} implies that 
$$
\{X^-(\um)F^{{\text{im}}}(\ul)K_{\gamma}\},\ \  {\text{
and}}\ \ \{K_{\gamma}E^{{\text{im}}}(\ul^{\prime})X^+(\um^{\prime})\}
$$ 
span two subalgebras of $ U_q$.  Indeed consider the left set.  Then after applying the antiautomorphism $\Omega$ to the Levendorskii-Soibelman formula we get
\begin{align*} 
F_{r\delta}^{(i)}X_{\beta_s^-} -  X_{\beta_s^-}F_{r\delta}^{(i)}&=\begin{cases}F_{r\delta}^{(i)} F_{\beta_s} -   F_{\beta_s}F_{r\delta}^{(i)}&{\text{if}}\ s\leq 0,   \cr
-F_{r\delta}^{(i)}K_{\beta_s}^{-1}E_{\beta_s} +  K_{\beta_s}^{-1}E_{\beta_s}F_{r\delta}^{(i)}  &{\text{
if}}\ s\geq 1,
\end{cases} \\
%&=\begin{cases}F_{r\delta}^{(i)} F_{\beta_s} -   F_{\beta_s}F_{r\delta}^{(i)}&{\text{if}}\ s\leq 0,   \cr
%-K_{\beta_s}^{-1}F_{r\delta}^{(i)}E_{\beta_s} +  K_{\beta_s}^{-1}E_{\beta_s}F_{r\delta}^{(i)}  &{\text{
%if}}\ s\geq 1,
%\end{cases} \\
&=\begin{cases}\sum_{(r\delta,i)<\nu_1<\dots <\nu_r <\beta_s }\bar c_{\nu} F_{\nu_r}^{a_r}\cdots F_{\nu_1}^{a_1} &{\text{if}}\ s\leq 0,   \cr
K_{\beta_s}^{-1}(-F_{r\delta}^{(i)}E_{\beta_s} +  E_{\beta_s}F_{r\delta}^{(i)})  &{\text{
if}}\ s\geq 1,
\end{cases}
\end{align*}
where $\bar c_\nu\in\mathbb Q(q^{1/2})$. 

From \cite{MR1802170}, Theorem 5.3.2 (4), one has 
$$
[E_{r\delta}^{(i)},F_{k\delta-\alpha_j}]=\begin{cases}  -x_{ijr}E_{(r-k)\delta+\alpha_j}K_{\alpha_j -k\delta}  &\quad \text{ if }r\geq k ,\\ 
  x_{ijr}F_{(k-r)\delta-\alpha_j}K_{r\delta}^{-1}  &\quad \text{ if }r< k, \\ 
\end{cases}
$$
for some $x_{ijr}\in\mathbb  Q(q^{1/2})$. 
Applying $\Omega$ to the above gives us 
$$
[F_{r\delta}^{(i)},E_{k\delta-\alpha_j}]=\begin{cases}  x_{ijr}K_{-\alpha_j+k\delta}F_{(r-k)\delta+\alpha_j}  &\quad \text{ if }r\geq k ,\\ 
 - x_{ijr}K_{-r\delta}^{-1} E_{(k-r)\delta-\alpha_j} &\quad \text{ if }r< k, \\ 
\end{cases}
$$
and both of these are in the span of  $\{X^-(\um)F^{{\text{im}}}(\ul)K_{\gamma}\}$.
Hence the left hand side $F_{r\delta}^{(i)}X_{\beta_s^-} -  X_{\beta_s^-}F_{r\delta}^{(i)}$ above is in the span of  $\{X^-(\um)F^{{\text{im}}}(\ul)K_{\gamma}\}$.

Let $ U_q^{\prime}$ be the span of $$\{X^-(\um)F^{{\text{
im}}}(\ul)K_{\gamma}E^{{\text{
im}}}(\ul^{\prime})X^+(\um^{\prime})\};$$
then the previous paragraph implies that $ U_q^{\prime}$ is stable under left product by the $X^-(\um)F^{{\text{
im}}}(\ul)K_{\gamma}$'s and 
under right product by the $K_{\gamma}E^{{\text{
im}}}(\ul^{\prime})X^+(\um^{\prime})$'s.

The classical PBW-basis is a subset of 
$$\{X^-(\um)F^{{\text{
im}}}(\ul)F(\up)K_{\gamma}E(\up^{\prime})E^{{\text{
im}}}(\ul^{\prime})X^+(\um^{\prime})\}$$
where $$F(\up)=\prod_{r\in\mathbb Z_+}F_{\beta_r}^{p_r},\ \ E(\up)=\prod_{r\in\mathbb Z_+}E_{\beta_r}^{p_r}$$
with $\up:\mathbb Z_+\to\mathbb Z_{\geq 0}$, 
$\#\{r\in\mathbb Z_+|p_r\neq 0\}<\infty$.
This implies that in order to prove that $ U_q^{\prime}= U_q$ it is enough to prove that 
$F(\up)E(\up^{\prime})\in U_q^{\prime}$.
This is done by induction on the ``height'' $h$ of $F(\up)E(\up^{\prime})$: 
$$h=\sum_{r\in\mathbb Z_+}(p_r+p_r^{\prime})h(\beta_r),\ {\text{
where}}\  
h(\sum_{i\in I}m_i\alpha_i)=\sum_{i\in I}m_i,$$ the cases $\up\equiv \underline{0}$ or $\up^{\prime}\equiv \underline{0}$ being obvious. 
Recall from the definition of $U_q(\hat{\mathfrak g})$, \secref{jimbodrinfeld}, that  $F(\up)E_{\beta_k}=E_{\beta_k}F(\up)+\sum fke$ with $k$ a monomial in the $K_i$'s ($i\in I$), $f\in U_q^-$ and $e\in U_q^+$ homogeneous, and the ``height'' of $fe$ less than that of $F(\up)E_{\beta_k}$. The scholium follows from the fact that the ``height'' of $F(\up)E(\up^{\prime\prime})$ and that of 
$feE(\up^{\prime\prime})$ are strictly less than $h$, where $p_r^{\prime\prime}=p_r^{\prime}-\delta_{rk}$. Hence the theorem follows.
\end{proof}

\section{Imaginary Verma Modules}
The algebra $\g$ has a triangular decomposition
$\g = \widehat{\mathfrak g}_{-S} \oplus \widehat{\mathfrak h}\oplus \widehat{\mathfrak g}_S$, where
$\widehat{\mathfrak g}_S = \oplus_{\alpha\in S}\widehat{\mathfrak g}_{\alpha}$ and $S$ is defined in \secref{partition}.
Let $U(\widehat{\mathfrak g}_S)$ (resp. $U(\widehat{\mathfrak g}_{-S})$) denote the universal enveloping
algebra of $\widehat{\mathfrak g}_S$ (resp. $\widehat{\mathfrak g}_{-S}$).

Let $\lambda \in P$, where $P$ is the weight lattice of $\g$.
A weight (with respect to $\widehat{\mathfrak h}$) $U(\g)$-module $V$ is
called an $S$-highest weight module with highest weight $\lambda$ if there
is some nonzero vector $v\in V$ such that
\begin{enumerate}[(i).]
\item $u^+ \cdot v = 0$ for all $u^+ \in   \widehat{\mathfrak g}_{S}$;
\item  $V = U(\g)\cdot v$.
\end{enumerate}

Let $\lambda \in P$.  We make $\mathbb C$ into a 1-dimensional
$U(\widehat{\mathfrak g}_{S} \oplus\widehat{ \mathfrak h})$-module by picking a generating
vector $v$ and setting
$(x+h)\cdot v = \lambda(h)v$, for all $ x\in \widehat{\mathfrak g}_{S}, h \in \widehat{\mathfrak h}$.
The induced module
$$
M(\lambda) = U(\g) \otimes_{U(\widehat{\mathfrak g}_{S} \oplus \widehat{ \mathfrak h})}\mathbb Cv = U(\widehat{\mathfrak g}_{-S})\otimes \mathbb C v
$$
is called the {\it imaginary Verma module} with
$S$-highest weight $\lambda$.
Imaginary Verma modules are in many ways similar to ordinary
Verma modules except they contain both finite and
infinite-dimensional weight spaces.  They
were studied in \cite{MR95a:17030}, from which we
summarize.

\begin{prop}[\cite{MR95a:17030}, Proposition 1, Theorem 1]\label{irreducibilitythm}
Let $\lambda \in P$, and let $M(\lambda)$ be the imaginary Verma module
of $S$-highest weight $\lambda$.  Then $M(\lambda)$ has the following properties.
\begin{enumerate}[(i).]
\item The module $M(\lambda)$ is a free $U(\widehat{\mathfrak g}_{-S})$-module of rank 1
generated by the $S$-highest weight vector $1 \otimes 1$ of weight $\lambda$.
\item  $M(\lambda)$ has a unique maximal submodule.
\item  Let $V$ be a $U(\g)$-module generated by some
$S$-highest weight vector $v$ of weight $\lambda$.  Then there exists a
unique surjective homomorphism $\phi:M(\lambda) \mapsto V$ such
that $\phi(1\otimes  1) = v$.
\item  $\dim M(\lambda)_{\lambda} = 1$.  For any $\mu = \lambda - k\delta$, $k$
a positive integer, $0< \dim M(\lambda)_{\mu} < \infty$.  If
$\mu \neq \lambda - k \delta$ for any integer $k \ge 0$ and
$\dim M(\lambda)_{\mu} \neq 0$, then $\dim M(\lambda)_{\mu} = \infty$.
\item  Let $\lambda, \mu \in  \widehat{\mathfrak h}^*$.  Any non-zero element of
$\Hom_{U(\g)}(M(\lambda), M(\mu))$ is injective.
\item  The module $M(\lambda)$ is irreducible if and only if
$\lambda(c) \neq 0$.
\end{enumerate}
\end{prop}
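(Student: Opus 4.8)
The plan is to deduce all six assertions from the triangular decomposition $\g=\widehat{\mathfrak g}_{-S}\oplus\widehat{\mathfrak h}\oplus\widehat{\mathfrak g}_S$, the classical Poincar\'e--Birkhoff--Witt theorem (see \cite{K}), and the representation theory of the Heisenberg subalgebra $\mathfrak s:=\big(\bigoplus_{l\neq 0}\mathfrak h\otimes t^{l}\big)\oplus\mathbb C c$. For (i)--(iii) I would argue as for ordinary Verma modules: PBW gives $U(\g)\cong U(\widehat{\mathfrak g}_{-S})\otimes U(\widehat{\mathfrak h})\otimes U(\widehat{\mathfrak g}_S)$ as a vector space, so $U(\g)$ is free as a right $U(\widehat{\mathfrak g}_S\oplus\widehat{\mathfrak h})$-module on a PBW basis of $U(\widehat{\mathfrak g}_{-S})$, and tensoring with $\mathbb C v$ yields (i). Grading $U(\widehat{\mathfrak g}_{-S})$ by root degree (every nonzero homogeneous element has degree a nonempty sum of roots from $-S$, so the degree-zero part is $\mathbb C v$), one gets $M(\lambda)_\lambda=\mathbb C v$ and that all weights of $M(\lambda)$ lie below $\lambda$. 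Then a submodule of the weight module $M(\lambda)$ is again a weight module; if it meets $M(\lambda)_\lambda$ it contains the cyclic vector, hence is everything, so every proper submodule has zero $\lambda$-component and the sum of all of them is again proper --- this is (ii). For (iii) I would use the induction--restriction adjunction $\Hom_{U(\g)}(M(\lambda),V)\cong\Hom_{U(\widehat{\mathfrak g}_S\oplus\widehat{\mathfrak h})}(\mathbb C v,V)$: the $S$-highest line of $V$ supplies the desired surjection, which is unique since determined on the generator.

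For (iv) I would again pass through freeness, $M(\lambda)_\mu\cong U(\widehat{\mathfrak g}_{-S})_{\mu-\lambda}$. The key observation is that the $\mathfrak g$-component of every root in $S$ lies in $Q_{0,+}=\sum_i\mathbb Z_{\geq0}\alpha_i$ and a nonempty sum of positive roots of $\mathfrak g$ never vanishes; hence a degree $-k\delta$ element of $U(\widehat{\mathfrak g}_{-S})$ involves only root vectors from the abelian subalgebra $\mathfrak a:=\bigoplus_{l>0}\mathfrak h\otimes t^{-l}$, so $M(\lambda)_{\lambda-k\delta}\cong U(\mathfrak a)_{-k\delta}$ is finite-dimensional (spanned by the finitely many monomials of total degree $k$) and nonzero ($h_i\otimes t^{-k}$ works). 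If instead $\mu=\lambda-\beta-m\delta$ with $\beta\in Q_{0,+}\setminus\{0\}$, every PBW monomial of weight $\mu-\lambda$ has a factor in some $\mathfrak g_{-\alpha}\otimes t^n$ with $\alpha\in\Delta_{0,+}$, and replacing this factor by the pair $(\mathfrak g_{-\alpha}\otimes t^{n+l})(\mathfrak h\otimes t^{-l})$ for large $l>0$ produces infinitely many distinct PBW basis vectors of the same weight, so $\dim M(\lambda)_\mu=\infty$. For (v): $U(\widehat{\mathfrak g}_{-S})$ is a domain (its associated graded is a symmetric algebra), so a nonzero $\phi\in\Hom_{U(\g)}(M(\lambda),M(\mu))$ carries the generator to $u\cdot v_\mu$ with $0\neq u\in U(\widehat{\mathfrak g}_{-S})$ ($v_\mu$ the generator of $M(\mu)$), whence $\phi(a\cdot 1\otimes1)=au\cdot v_\mu$ vanishes iff $au=0$ iff $a=0$; thus $\phi$ is injective.

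For (vi), the ``if'' direction ($\lambda(c)\neq0\Rightarrow$ irreducible) is the substantive one, so I would first dispose of the converse: if $\lambda(c)=0$, the functional on the subalgebra $\mathfrak p:=\widehat{\mathfrak g}_S\oplus\widehat{\mathfrak h}\oplus\mathfrak a$ that is $\lambda$ on $\widehat{\mathfrak h}$ and zero on $\widehat{\mathfrak g}_S$ and $\mathfrak a$ is a Lie algebra homomorphism --- the only bracket to test, $[\mathfrak h\otimes t^{-l},\mathfrak h\otimes t^{l}]\subseteq\mathbb C c$, dies precisely because $\lambda(c)=0$ --- so by (iii) there is a surjection of $M(\lambda)$ onto the nonzero ``reduced'' module $U(\g)\otimes_{U(\mathfrak p)}\mathbb C_\lambda$ whose kernel is nonzero (it contains $(h_i\otimes t^{-l})(1\otimes1)$), making $M(\lambda)$ reducible. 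For the forward direction, let $0\neq N\subseteq M(\lambda)$ be a submodule; among the weights $\lambda-\gamma$ occurring in $N$ pick one with $\mathrm{ht}_0(\gamma)$ minimal, where $\mathrm{ht}_0(\sum_im_i\alpha_i+m\delta)=\sum_im_i$. A nonzero vector $w\in N$ of that weight is killed by all $e_i\otimes t^m$ ($i\in I_0$, $m\in\mathbb Z$), as these lower $\mathrm{ht}_0$. I would then invoke the key fact that, when $\lambda(c)\neq0$, $M(\lambda)$ has no such vector with $\mathrm{ht}_0(\gamma)>0$; granting it, $w$ lies on the $\delta$-string $\bigoplus_{j\geq0}M(\lambda)_{\lambda-j\delta}$, which is $\mathfrak s$-stable and, as an $\mathfrak s$-module, is the bosonic Fock module generated by $v$ at the nonzero level $\lambda(c)$. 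Since the form on $\mathfrak h$ is nondegenerate, the operators $h_i\otimes t^l$ ($l>0$) act there as a spanning family of constant-coefficient derivations in the creation variables $h_j\otimes t^{-l}$, so their common kernel is $\mathbb C v$ and the Fock module is irreducible; hence $N\ni v$ and $N=M(\lambda)$.

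The part I expect to be the main obstacle is exactly the fact invoked above: the absence, when $\lambda(c)\neq0$, of nonzero vectors in $M(\lambda)$ that are killed by all $e_i\otimes t^m$ but carry weight of positive $\mathrm{ht}_0$. This is what forces every nonzero submodule down onto the principal $\delta$-string, and it is where the nonvanishing of the level genuinely enters; it is the technical core of \cite[Proposition 1 and Theorem 1]{MR95a:17030}, to which I would defer for the remaining details. Everything else is PBW bookkeeping together with the classical irreducibility of Heisenberg Fock modules of nonzero level.
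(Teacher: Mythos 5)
First, a point of reference: the paper itself offers no proof of this proposition --- it is quoted, with attribution, from Futorny \cite{MR95a:17030} (Proposition 1 and Theorem 1 there) --- so the only comparison available is with that source. Your treatment of (i)--(v) and of the reducible direction of (vi) is sound and runs along the same lines as the original: PBW for $\g=\widehat{\mathfrak g}_{-S}\oplus\widehat{\mathfrak h}\oplus\widehat{\mathfrak g}_S$ gives freeness; the observation that a nonempty sum of elements of $S$ never vanishes gives $M(\lambda)_\lambda=\mathbb C v$ and hence (ii)--(iii); the fact that the only elements of $-S$ with trivial finite part are the $-k\delta$ gives the dichotomy in (iv), and your trick of replacing a real-root factor by $(\,\cdot\otimes t^{n+l})(h\otimes t^{-l})$ correctly produces the infinite-dimensionality; (v) is the standard domain argument; and for $\lambda(c)=0$ the character of your $\mathfrak p=\widehat{\mathfrak g}_S\oplus\widehat{\mathfrak h}\oplus\mathfrak a$ exists precisely because the only problematic bracket $[\mathfrak h\otimes t^{-l},\mathfrak h\otimes t^{l}]\subseteq\mathbb C c$ is killed, yielding the reduced module and a proper nonzero kernel.

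The genuine gap is the one you flagged yourself: the claim that for $\lambda(c)\neq 0$ there is no nonzero $w\in M(\lambda)$ of weight $\lambda-\beta-m\delta$ with $\beta\in Q_{0,+}\setminus\{0\}$ annihilated by all $e_\alpha\otimes t^{n}$, $\alpha\in\Delta_{0,+}$, $n\in\mathbb Z$. This cannot be deferred to \cite{MR95a:17030} as a ``remaining detail,'' because it \emph{is} the theorem: once it is granted, the reduction to a vector of minimal finite height in a submodule and the irreducibility of the Heisenberg Fock module at nonzero level are routine, as you say. Ruling out such $w$ requires an actual computation in which the level enters: writing $w$ in the PBW basis of $U(\widehat{\mathfrak g}_{-S})$ and applying $e_i\otimes t^{n}$ for varying $n$, the central contribution $n\lambda(c)$ coming from brackets of the form $[e_i\otimes t^{n}, f_i\otimes t^{-n}]$ produces coefficients growing with $n$ which must be shown not to cancel against the Heisenberg and lower-order terms; choosing the right ordering, leading monomial, and value of $n$ to make this precise is exactly the content of Futorny's Theorem 1, and it is not PBW bookkeeping. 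So as a self-contained argument your proposal is incomplete at this one point, although your reduction of all six statements to it, and everything surrounding it, is correct and consistent with the structure of the original proof.
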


\subsection{The Subalgebras $U_q(-S)$ and $U_q^-(S)$ of $U_q(\hat{\mathfrak g})$}
Let $U_q(\pm S)$ be the subalgebra of $U_q:=U_q(\g)$ generated by
$\{ X_{\beta_r^\pm}\ | r\in\mathbb Z\}\cup\{E_{\pm k\delta}^{(i)}|\,1\leq i\leq N, k>0\}$,
and let
$B_q^{r}$ denote the subalgebra of $U_q(\g)$ generated
by $U_q(S)\cup U_q^0(\g)$ (the superscript $r$ is used to remind us that it is generated in part by root vectors).
Let $U_q^\pm( S)$ be the subalgebra of $U_q(\g)$ generated by
$\{ x^\pm_{ik}\ | 1\leq i\leq N, k\in\mathbb Z\}\cup\{h_{il}|\,1\leq i\leq N, l\in\pm\mathbb Z_{\geq 0}^*\}$,
and let
$B_q^{d}$ denote the subalgebra of $U_q(\g)$ generated
by $U_q^+(S)\cup U_q^0(\g)$.  (The superscript $d$, is used to remind us that the respective subalgebras are generated in part by Drinfeld generators).

Let $\lambda \in P$.
A $U_q(\g)$ weight module $V_q^r$ is called an $S$-highest weight
module with highest weight $\lambda$ if there is a non-zero vector
$v \in V_q^r$ such that:
\begin{enumerate}[(i).]
\item $u^+\cdot v  =0$ for all $u^+ \in U_q(S) \setminus \mathbb C(q^{1/2})$;
\item $V_q^r = U_q(\g) \cdot v$.
\end{enumerate}
Note that, in the absence of a general quantum PBW theorem for
non-standard partitions, we cannot immediately claim that  an
$S$-highest weight module $V_q^r$ is generated by $U_q(-S)$.
This is in contrast to the classical case.

Let $\mathbb C(q^{1/2}) \cdot v$ be a 1-dimensional vector space over $\mathbb C(q^{1/2})$.
Let $\lambda \in P$, and set  $X_{\beta^+_r}\cdot v=0$, for all $r\in\mathbb Z$ and $E_{k\delta}^{(i)}\cdot v=0$
for $k<0$ and $1\leq i\leq N$ ,  $K_i^{\pm 1}\cdot v = q^{\pm\lambda(h_i)}v$
($i \in I$) and
$D^{\pm 1}\cdot v = q^{\pm \lambda(d)}v$.
Define $M_q^r(\lambda)=U_q(\g)\otimes_{B_q^r} \mathbb C(q^{1/2}) v$.  Then
$M_q^r(\lambda)$ is an $S$-highest weight $U_q$-module called
the {\it quantum imaginary Verma module} with highest weight $\lambda$.
If we let $L_q^r$ be the left ideal in $U_q$ generated by $X_{\beta_r^+} $ for all $r\in\mathbb Z$ and $E_{k\delta}^{(i)} $
for $k<0$ and $1\leq i\leq N$,  $K_i^{\pm 1}- q^{\pm\lambda(h_i)}$
($i \in I$) and
$D^{\pm 1}-q^{\pm \lambda(d)}$, then $U_q/L_q^r\cong M_q^r(\lambda)$ which is induced by $1\mapsto v$.

We obtain the following refinement of \cite[Theorem 3.4]{MR1662112}:
\color{black}

\begin{thm} \label{pbwthm} As a vector space, $M_q^r(\lambda)$ has a basis consisting of the ordered monomials
\begin{equation}\label{pbw}
\{X^-(\um)F^{{\text{im}}}(\ul)v\}.
\end{equation}
\end{thm}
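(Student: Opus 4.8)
The plan is to derive the basis statement for $M_q^r(\lambda)$ directly from the imaginary PBW basis of $U_q(\hat{\mathfrak g})$ established in \thmref{mainresult}, using the isomorphism $U_q/L_q^r\cong M_q^r(\lambda)$. First I would observe that by \thmref{mainresult} every element of $U_q(\hat{\mathfrak g})$ is an $\mathbb A$- (resp.\ $\mathbb C(q^{1/2})$-) linear combination of the monomials
$$
X^-(\um)\,F^{{\text{im}}}(\ul)\,K_{\alpha}\,D^r\,\gamma^{s/2}\,E^{{\text{im}}}(\ul^{\prime})\,X^+(\um^{\prime}),\qquad r,s\in\mathbb Z,\ \alpha\in Q_0.
$$
Passing to the quotient $U_q/L_q^r$, the rightmost factors act on the cyclic vector $v$ in a controlled way: each $X_{\beta_r^+}$ (so each factor in $X^+(\um')$) kills $v$, each $E_{k\delta}^{(i)}$ with $k<0$ kills $v$ (so each factor in $E^{{\text{im}}}(\ul')$, which after possibly reindexing involves only negative imaginary root vectors once moved to the right — this is the point requiring care, see below), and $K_i^{\pm1},D^{\pm1},\gamma^{\pm1/2}$ act by the scalars $q^{\pm\lambda(h_i)}$, $q^{\pm\lambda(d)}$, and the corresponding central scalar. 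Hence the image of a general PBW monomial in $M_q^r(\lambda)$ is, up to a nonzero scalar in $\mathbb C(q^{1/2})$, either $0$ or of the form $X^-(\um)F^{{\text{im}}}(\ul)v$. This shows the monomials \eqnref{pbw} span $M_q^r(\lambda)$.

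The subtle step — and the one I expect to be the main obstacle — is the separation of the ``positive imaginary'' part. In the PBW basis of \thmref{mainresult} the imaginary factors $E^{{\text{im}}}(\ul')$ sit to the right of $K_\alpha D^r\gamma^{s/2}$ and consist of the $E_{r\delta}^{(i)}=h_{ir}\gamma^{-r/2}$ with $r>0$, which do \emph{not} annihilate $v$ (only the $k<0$ ones do). So a monomial with $\ul'\not\equiv\underline 0$ does not obviously map to something in the span of \eqnref{pbw}. To handle this one must argue that such monomials are redundant: using the commutation relations of \secref{jimbodrinfeld}--\secref{drinfeldgen} (in particular \propref{Arelns}, \lemref{psix2}, and the Levendorskii--So\u\i belman convexity formula \eqnref{convexity} together with its $\Omega$-image), one shows that moving each positive imaginary root vector $E_{k\delta}^{(i)}$ ($k>0$) and each $X_{\beta_s^+}$ past the central/Cartan block and then applying it to $v$ either gives $0$ or produces lower-height terms already of the desired form. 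This is precisely the span argument already invoked in the proof of \thmref{mainresult}, which cites \cite[Proposition 1, Theorem 1]{MR1662112} and \propref{irreducibilitythm}; I would reference that argument rather than reprove it, pointing out that it equally shows $M_q^r(\lambda)=B_q^r\cdot v$ is spanned by $X^-(\um)F^{{\text{im}}}(\ul)v$.

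Finally, for linear independence: I would use a weight/dimension-counting argument. The classical imaginary Verma module $M(\lambda)$ is, by \propref{irreducibilitythm}(i), a free $U(\hat{\mathfrak g}_{-S})$-module of rank one, and the monomials $X^-(\um)F^{{\text{im}}}(\ul)$ furnish a PBW basis of $U(\hat{\mathfrak g}_{-S})$ (the classical limit of the $X^-,F^{{\text{im}}}$ construction), so the corresponding vectors are linearly independent in $M(\lambda)$. Since the $\mathbb A$-form $U_{\mathbb A}$ has the standard triangular decomposition (\corref{Arelnscor}) and specialization at $q=1$ recovers $U(\hat{\mathfrak g})$, an $\mathbb A$-form $M_{\mathbb A}^r(\lambda)$ of $M_q^r(\lambda)$ specializes to $M(\lambda)$; a nontrivial $\mathbb C(q^{1/2})$-linear relation among the \eqnref{pbw} could be scaled into $U_{\mathbb A}$, specialized, and would contradict classical independence, because the monomials $X^-(\um)F^{{\text{im}}}(\ul)v$ lie in distinct weight spaces or, within a fixed weight space $M_q^r(\lambda)_\mu$, their number equals $\dim M(\lambda)_\mu$ by the same combinatorics. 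Hence \eqnref{pbw} is a basis. The only real work is making the specialization argument clean, i.e.\ checking that the $\mathbb A$-form of $M_q^r(\lambda)$ is free over $\mathbb A$ on the stated monomials, which follows from \corref{Arelnscor} and the span statement above by a standard Nakayama-type argument.
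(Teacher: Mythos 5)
The ``subtle step'' you flag---handling monomials with $\ul'\not\equiv\underline 0$---rests on a misreading of which imaginary root vectors annihilate $v$, and the workaround you propose would not actually work. The printed condition ``$E_{k\delta}^{(i)}\cdot v=0$ for $k<0$'' is inconsistent with the surrounding definitions and must be read with $k>0$: the vectors $E^{(i)}_{k\delta}=\gamma^{-k/2}h_{ik}$ with $k>0$ lie in $U_q(S)$, hence in $B_q^r$ and $L_q^r$ (this is what makes $\mathbb C(q^{1/2})v$ a $B_q^r$-module, what condition (i) of the $S$-highest-weight definition demands, and what is used later to get $L_q^d\subset L_q^r$ via $h_{il}\in L_q^r$ for $l>0$). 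With the correct sign every factor of $E^{\text{im}}(\ul')$ kills $v$, your detour disappears, and your span argument collapses to the paper's. By contrast, the ``redundancy'' argument you sketch could not serve as a substitute: if $E^{(i)}_{k\delta}v$ ($k>0$) were nonzero it would have weight $\lambda+k\delta$, and no weight of that form occurs among the vectors $X^-(\um)F^{\text{im}}(\ul)v$ (the $\Delta_{0,+}$-components contributed by the $\beta_r^-$ cannot cancel), so no commutation or height induction can return such a monomial to the span of \eqnref{pbw}. Annihilation by the positive imaginary root vectors is an imposed relation of the module, not a consequence of the algebra relations.

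For linear independence you take a genuinely different, and much heavier, route than the paper. The paper's proof is one line because independence is already contained in \thmref{mainresult}: by the convexity argument inside its proof, $B_q^r$ is exactly the span of the monomials $K_\alpha D^r\gamma^{s/2}E^{\text{im}}(\ul')X^+(\um')$, which form part of the basis \eqnref{imaginarypbw}; hence $U_q$ is a free right $B_q^r$-module with basis $\{X^-(\um)F^{\text{im}}(\ul)\}$, and $M_q^r(\lambda)=U_q\otimes_{B_q^r}\mathbb C(q^{1/2})_\lambda$ is free on the images of these monomials. Your alternative---specializing an $\mathbb A$-form at $q=1$ and comparing with the classical module via \propref{irreducibilitythm}---is essentially the older argument of \cite{MR1662112} and could be made to work, but as sketched it has real gaps: many distinct monomials share a weight and the relevant weight spaces can be infinite dimensional, so ``their number equals $\dim M(\lambda)_\mu$'' is not usable as stated; and the ``Nakayama-type'' freeness of the $\mathbb A$-form on the stated monomials is essentially the assertion to be proved (what one actually needs is an $\mathbb A$-spanning statement together with a specialization map onto $M(\lambda)$ carrying the monomial vectors to a classical PBW family, which is the content of \cite{MR1662112}). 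Given that \thmref{mainresult} is available, this machinery is unnecessary.
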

\begin{proof}  This module is free over this set of vectors by \thmref{mainresult}.
\end{proof}
\begin{cor}  $M_q^r(\lambda)$ is free as a module over $U_q(-S)$.
\end{cor}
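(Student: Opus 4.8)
The plan is to show that the map $\psi\colon U_q(-S)\to M_q^r(\lambda)$, $\psi(u)=u\cdot v$, is an isomorphism of left $U_q(-S)$-modules, where $U_q(-S)$ acts on itself by left multiplication. Since $U_q(-S)$ is a subalgebra of $U_q$ the map $\psi$ is well defined and visibly left $U_q(-S)$-linear, so once it is shown to be bijective we conclude that $M_q^r(\lambda)\cong U_q(-S)$ is free of rank one, generated by $v$.

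The one substantive ingredient is that the ordered monomials $\{X^-(\um)F^{\text{im}}(\ul)\}$ form a $\mathbb C(q^{1/2})$-basis of $U_q(-S)$; granting this, $\psi$ carries this basis bijectively onto the basis of $M_q^r(\lambda)$ from \thmref{pbwthm}, hence is an isomorphism. Linear independence of these monomials is immediate from \thmref{mainresult}: they are exactly the basis vectors of $U_q(\g)$ listed there with $\alpha=0$, $r=s=0$ and $\ul^{\prime},\um^{\prime}$ identically zero. That they span $U_q(-S)$ is the part needing an argument. Writing $A$ for their $\mathbb C(q^{1/2})$-span, one has $A\subseteq U_q(-S)$ since each such monomial is a product of the generators $X_{\beta_r^-}$ and $E^{(i)}_{-k\delta}=\Omega(E^{(i)}_{k\delta})$ of $U_q(-S)$, and conversely each of these generators lies in $A$ (being itself such a monomial); so it suffices to prove that $A$ is closed under multiplication. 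This is a straightening argument of the same kind as in the proof of \thmref{mainresult}: one rewrites an arbitrary product of generators of $U_q(-S)$ in standard form using the Levendorskii--Soibelman convexity formula \eqnref{convexity} (transported by $\Omega$) to reorder the $X_{\beta_r^-}$ among themselves, the commutators $F_{r\delta}^{(i)}X_{\beta_s^-}-X_{\beta_s^-}F_{r\delta}^{(i)}$ computed there via \cite[Theorem 5.3.2(4)]{MR1802170} and \lemref{psix2} to move the imaginary vectors to the right, and \eqnref{hs} (which gives $[E^{(i)}_{-k\delta},E^{(j)}_{-l\delta}]=0$ for $k,l>0$). The point to be verified is that all correction terms so produced again lie in $A$; in particular no bare Cartan element $K_\gamma$ with $\gamma\ne 0$ is created, because the $K$-prefactor attached to an $X_{\beta_r^-}$ with $r\ge 1$ is in each case forced, by conservation of weight, to be exactly $K_{\beta_r}^{-1}$, so the relations listed reassemble into genuine products of $X_{\beta_r^-}$'s and $F_{k\delta}^{(i)}$'s.

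With the basis statement in hand, bijectivity of $\psi$ is immediate: surjectivity holds because by \thmref{pbwthm} the vectors $X^-(\um)F^{\text{im}}(\ul)v=\psi\bigl(X^-(\um)F^{\text{im}}(\ul)\bigr)$ span $M_q^r(\lambda)$, and injectivity holds because if $\psi(u)=0$ then, expanding $u=\sum c_{\um,\ul}X^-(\um)F^{\text{im}}(\ul)$ in the basis of $U_q(-S)$, one gets $\sum c_{\um,\ul}X^-(\um)F^{\text{im}}(\ul)v=0$, whence every $c_{\um,\ul}=0$ by the linear independence in \thmref{pbwthm}. The main obstacle is therefore the spanning half of the basis statement for $U_q(-S)$, that is, checking that the straightening procedure — already carried out, with additional $U_q^0$- and positive factors present, inside the proof of \thmref{mainresult} — introduces no terms outside $A$ when one starts only from generators of $U_q(-S)$; everything else is formal.
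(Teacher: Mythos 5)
Your overall strategy is the right one, and it is what the paper leaves implicit: the corollary is meant to follow from \thmref{pbwthm} together with the fact that the ordered monomials $X^-(\um)F^{\text{im}}(\ul)$ form a basis of $U_q(-S)$ (linear independence being free from \thmref{mainresult}, as you say). The gap is in your justification of the spanning step, in two places. First, reordering $X_{\beta_r^-}$ with $r\ge 1$ against $X_{\beta_s^-}$ with $s\le 0$ is a commutation between a $K$-twisted \emph{positive} real root vector $-K_{\beta_r}^{-1}E_{\beta_r}$ and a genuine \emph{negative} root vector $F_{\beta_s}$; this is a cross-triangular commutation and is not an instance of the Levendorskii--Soibelman formula \eqnref{convexity} (even transported by $\Omega$), which only reorders root vectors within one triangular half. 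Second, ``conservation of weight'' cannot by itself rule out a bare $K_\gamma$ in the correction terms, since $K_\gamma$ has weight zero and weight bookkeeping is blind to such factors; the cancellation has to be read off from the explicit formulas (it does hold, e.g. in the Damiani commutator the prefactor $K_{\beta_s}^{-1}$ coming from $X_{\beta_s^-}$ exactly cancels the Cartan factor produced on the right-hand side), so this point needs the computation rather than a weight argument.

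Both defects are repaired at once by the device the paper itself uses at the start of the proof of \thmref{mainresult}: by the corollary following \lemref{trealrootvector}, a suitable power $T_\omega^h$ (which fixes all imaginary root vectors and sends $X_{\beta_r^\pm}$ to $X_{\beta_{r+N_n}^\pm}$) moves the finitely many real root vectors occurring in a given product into the range $r\le 0$, where every $X_{\beta_r^-}$ is a genuine $F_{\beta_r}$. The shifted product then lies in $U_q^-$, where \eqnref{convexity} for negative roots together with the block structure of Beck's order (roots lying between two roots of the blocks $\{-\alpha-n\delta\}$ and $\{-k\delta\}$ stay in those blocks) straightens it into ordered monomials in the $F_{\beta_r}$, $r\le 0$, and the $F^{(i)}_{k\delta}$, with no Cartan factors appearing; applying $T_\omega^{-h}$ returns a combination of ordered monomials $X^-(\um)F^{\text{im}}(\ul)$. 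With that substitution for your straightening paragraph, the rest of your argument (the map $u\mapsto u\cdot v$ carrying the basis of $U_q(-S)$ onto the basis of \thmref{pbwthm}) is correct and matches the intended proof.
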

Recall the notation from \secref{drinfeldgen}.
Let $M^d_q(\lambda)=U_q/L_q^d$ where $L_q^d$ is the left ideal generated by the Drinfeld generators $x^+_{ik}$, $h_{il}$, $i\in  I_0$, $k\in\mathbb Z$, $l>0$, together with $K_i^{\pm 1}-q^{\pm \lambda(h_i)}$, $\gamma^{\pm 1/2}-q^{\pm \lambda(c)/2}$ and $D^{\pm 1}-q^{\pm \lambda(d)}$.    Let $B_q^d$ be the subalgebra of $U_q$ generated by $U^+_q(S)$ and $U^0_q(\hat{\mathfrak g})$ and let $\mathbb C(q^{1/2})_\lambda$ be the one dimensional $B_q^d$-module where $x^+_{ik}1=0$, $h_{il}1=0$, $K_i^{\pm 1}1=q^{\pm \lambda(h_i)}1$, $i\in  I_0$, $k\in\mathbb Z$, $l>0$,  $\gamma^{\pm 1/2}1=q^{\pm \lambda(c)/2}1$ and $D^{\pm 1}1=q^{\pm \lambda(d)}1$.  Note that $B_q^d\subseteq B_q^r$ as $E_{\alpha_i+k\delta}=o(i)^kx^+_{ik}$ for $k\geq0$,  $F_{-\alpha_i-k\delta}=-o(i)^kK_i \gamma^kx^+_{ik}$ for $k<0$, and $E_{k\delta}^{(i)}=\gamma^{-k/2}h_{ik}$ (see \eqnref{rootvector1} and  \eqnref{rootvector4}).

By universal mapping properties of quotients and the tensor products one has
$$
M_q^d(\lambda)\cong U_q\otimes _{B^d_q}\mathbb C(q^{1/2})_\lambda.
$$
Since $L^d_q\subset L^r_q$, there is a surjective $U_q$-module homomorphism $\pi: M^d_q(\lambda)\to M_q^r(\lambda)$. 
\begin{cor}  
$M^d_q(\lambda)$ is isomorphic to $M^r_q(\lambda)$ as $U_q$-modules. 
\end{cor}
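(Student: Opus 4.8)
The plan is to prove that the surjection $\pi\colon M^d_q(\lambda)\to M^r_q(\lambda)$ of the preceding paragraph is in fact injective, and the cleanest route is a basis count built on \thmref{mainresult} and \thmref{pbwthm}. Write $\bar v$ for the canonical generator $1\otimes 1$ of $M^d_q(\lambda)=U_q\otimes_{B^d_q}\mathbb C(q^{1/2})_\lambda$, so that $\pi(\bar v)=v$. Since $M^d_q(\lambda)=U_q\cdot\bar v$, I would first feed in the PBW basis of $U_q$ from \thmref{mainresult}: every vector of $M^d_q(\lambda)$ is a $\mathbb C(q^{1/2})$-combination of elements $X^-(\um)F^{\text{im}}(\ul)K_\alpha D^r\gamma^{s/2}E^{\text{im}}(\ul')X^+(\um')\cdot\bar v$, and the problem reduces to absorbing the three right-hand blocks.

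Three observations do this. First, $K_\alpha$, $D^{r}$ and $\gamma^{s/2}$ act on $\bar v$ by scalars, straight from the definition of $\mathbb C(q^{1/2})_\lambda$. Second, each imaginary root vector $E^{(i)}_{k\delta}=h_{ik}\gamma^{-k/2}$ with $k>0$ annihilates $\bar v$ (indeed $h_{ik}\bar v=0$ for $k>0$, and distinct $E^{(i)}_{k\delta}$ commute since $[h_{ik},h_{jl}]=0$ when $k,l>0$), so $E^{\text{im}}(\ul')\bar v=0$ unless $\ul'\equiv 0$. Third — and this is the only point requiring real work — $X_{\beta_r^+}\cdot\bar v=0$ for every $r\in\mathbb Z$, hence $X^+(\um')\bar v=0$ unless $\um'\equiv 0$. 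Granting these, $M^d_q(\lambda)$ is spanned by $\{X^-(\um)F^{\text{im}}(\ul)\bar v\}$; being $U_q$-linear, $\pi$ carries this set onto $\{X^-(\um)F^{\text{im}}(\ul)v\}$, which is a basis of $M^r_q(\lambda)$ by \thmref{pbwthm}. A surjection carrying a spanning set onto a basis must send that set bijectively onto the basis, so the set is linearly independent in $M^d_q(\lambda)$, hence a basis, and $\pi$ is an isomorphism. (Equivalently, one may phrase everything as the identity $B^d_q=B^r_q$: the inclusion $B^d_q\subseteq B^r_q$ is noted just before the statement, the reverse follows from the same root-vector formulas, and the two inducing one-dimensional modules are identified by matching the eigenvalues of the generators, whence $L^d_q=L^r_q$.)

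For the third observation, when $\beta_r=\pm\alpha_i+k\delta$ is a translate of a simple root this is immediate from \eqnref{rootvector1}, \eqnref{rootvector3} and \eqnref{rootvector4}, which express $X_{\beta_r^+}$ as $x^+_{ik}$ up to left and right multiplication by elements of $U^0_q$; since $L^d_q$ contains $x^+_{ik}$ for all $k\in\mathbb Z$ and $U^0_q$ acts on $\bar v$ by scalars, $X_{\beta_r^+}\bar v=0$. For a general $\alpha\in\Delta_{0,+}$ one reduces to the simple case: by the structure of Beck's PBW basis (cf. \cite{MR1301623}, \cite{MR1634087}) and the Levendorski\u\i--So{\u\i}belman convexity formula \eqnref{convexity}, the root vector $E_{m\delta+\alpha}$ ($m\ge 0$) lies in the subalgebra generated by the $E_{m'\delta+\alpha_i}=o(i)^{m'}x^+_{im'}$ ($m'\ge 0$), hence in the augmentation ideal of $U^+_q(S)$, which is contained in $L^d_q$; similarly $F_{m\delta-\alpha}$ ($m>0$) lies in $U^0_q\cdot U^+_q(S)$, so $X_{\beta_r^+}=-F_{m\delta-\alpha}K_{m\delta-\alpha}\in L^d_q$ as well. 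In each case $X_{\beta_r^+}\bar v=0$.

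The main obstacle is precisely this last reduction: one must know that every real root vector attached to a root of $S$ actually lies in the subalgebra generated by the Drinfeld generators $x^+_{ik}$ (up to $U^0_q$-twists), i.e.\ that the root-vector description ($B^r_q$) and the Drinfeld description ($B^d_q$) of the imaginary Borel genuinely coincide beyond the simple-root level. This is where the braid-group machinery of Section~3, together with the cited results of Beck and Damiani, is essential; once it is in place, the counting argument above is routine and the corollary follows.
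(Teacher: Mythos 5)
Your counting strategy (map a spanning set of $M^d_q(\lambda)$ onto the basis of $M^r_q(\lambda)$ from \thmref{pbwthm}, using the PBW basis of \thmref{mainresult}) is sound, and your first two observations are fine, but there is a genuine gap exactly at the point you flag: the claim that $X_{\beta_r^+}\in L^d_q$ for \emph{every} $r$, i.e.\ that $E_{m\delta+\alpha}$ ($m\ge 0$) and $F_{m\delta-\alpha}K_{m\delta-\alpha}$ ($m>0$) kill $\bar v$ when $\alpha\in\Delta_{0,+}$ is \emph{not} simple. The explicit formulas \eqnref{rootvector1}, \eqnref{rootvector3}, \eqnref{rootvector4} only treat the roots $\pm\alpha_i+k\delta$ with $\alpha_i$ simple, and the Levendorskii--Soibelman formula \eqnref{convexity} does not deliver the reduction you invoke: it writes a $q$-commutator of two root vectors as a sum of monomials in \emph{intermediate} root vectors, with no control over whether the coefficient of the single intermediate root vector of root $\alpha+\beta$ is nonzero, so by itself it does not show that $E_{m\delta+\alpha}$ lies in the subalgebra generated by the $x^+_{ik}$. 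What you need is the inclusion $B^r_q\subseteq B^d_q$, which is the hard direction; the paper records (and uses) only $B^d_q\subseteq B^r_q$, and your parenthetical remark that the reverse inclusion ``follows from the same root-vector formulas'' is not correct, since those formulas say nothing about non-simple $\alpha$. Neither the paper nor your sketch establishes this step, so as written the argument is incomplete.

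The paper's own proof is organized precisely to avoid that step: freeness of $M^r_q(\lambda)$ over $U_q(-S)$ (the preceding corollary) gives a $U_q(-S)$-module section $\psi$ of $\pi$, and surjectivity of $\psi$ follows from \corref{Arelnscor} (which yields $M^d_q(\lambda)=U^-_q(S)\bar v$) combined with the \emph{easy} inclusions $x^-_{ik},\,h_{i,-l}\in U_q(-S)U^0_q$ from \eqnref{rootvector2}, \eqnref{rootvector3} and \eqnref{imaginaryrootvectordef}; only simple-root data enters. If you want to salvage your route, you can repair your third observation the same way instead of via subalgebra generation: by \corref{Arelnscor} every weight of $M^d_q(\lambda)=U^-_q(S)\bar v$ lies in $\lambda-\sum_i\mathbb Z_{\ge0}\alpha_i+\mathbb Z\delta$, whereas a nonzero $X_{\beta_r^+}\bar v$ would have weight $\lambda+\alpha+m\delta$ with $0\neq\alpha\in\sum_i\mathbb Z_{\ge0}\alpha_i$, forcing $X_{\beta_r^+}\bar v=0$. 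But note that this patch already leans on the triangular decomposition that drives the paper's argument, so the resulting proof is no longer really an independent alternative.
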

\begin{proof}  Due to the previous Corollary there exists a $U_q(-S)$-module homomorphism $\psi: M_q^r(\lambda)\to M_q^d(\lambda)$ such that $\psi\circ \pi$ is the identity on $M_q^r(\lambda)$.   The image of $\psi$ is a $U_q(-S)$-submodule of $M_q^d(\lambda)$.  But $x_{ik}^-\in U_q(-S)U^0_q$, for $k\in\mathbb Z$,  and $h_{il}\in U_q(-S)U^0_q$, for $l\in -\mathbb Z_{>0}$ by \eqnref{rootvector2}, \eqnref{rootvector3} and \eqnref{imaginaryrootvectordef}.  Moreover $U_q^-(S)$ is generated by these elements and thus $M_q^d(\lambda)\supseteq U_q(-S)v=U_q(-S)U_q^0v\supseteq U_q^-(S)v=M_q^d(\lambda)$ by \corref{Arelnscor}.
\end{proof}

\def\cprime{$'$} \def\cprime{$'$}

\end{document}